\newtheorem{theorem}{Theorem}[section]
\newtheorem{lemma}[theorem]{Lemma}
\newtheorem{proposition}[theorem]{Proposition}
\newtheorem{sublemma}{}[theorem]
\newcommand{\cN}{{\mathcal N}}
\newcommand{\cS}{{\mathcal S}}
\newcommand{\cT}{{\mathcal T}}
\newcommand{\blue}{\textcolor{black}}
\begin{document}

\title{Display Sets of Normal and Tree-Child Networks}

\thanks{The second and third authors were supported by the New Zealand Marsden Fund.}

\author{Janosch D\"{o}cker}
\address{Department of Computer Science, University of T\"{u}bingen, T\"{u}bingen, Germany}
\email{janosch.doecker@uni-tuebingen.de}

\author{Simone Linz}
\address{School of Computer Science, University of Auckland, Auckland, New Zealand}
\email{s.linz@auckland.ac.nz}

\author{Charles Semple}
\address{School of Mathematics and Statistics, University of Canterbury, Christchurch, New Zealand}
\email{charles.semple@canterbury.ac.nz}

\keywords{Normal networks, tree-child networks, displaying phylogenetic trees, {\sc Display-Set-Equivalence}}

\subjclass{05C85, 92D15}

\date{\today}

\begin{abstract}
Phylogenetic trees canonically arise as embeddings of phylogenetic networks. We recently showed that the problem of deciding if two phylogenetic networks embed the same sets of phylogenetic trees is computationally hard, \blue{in particular, we showed it to be $\Pi^P_2$-complete}. In this paper, we establish a polynomial-time algorithm for this decision problem if the initial two networks consists of a normal network and a tree-child network. The running time of the algorithm is quadratic in the size of the leaf sets.
\end{abstract}

\maketitle

\section{Introduction}

\blue{Phylogenetic networks rather than phylogenetic (evolutionary) trees provide a more faithful representation of the ancestral history of certain collections of extant species. The reason for this is the existence of non-treelike (reticulate) evolutionary processes such as lateral gene transfer and hybridisation. However, while at the species-level evolution is not necessarily treelike, at the level of genes, we typically assume treelike evolution. Consequently, as phylogenetic networks are frequently viewed as an amalgamation of the ancestral history of genes, we are interested in the phylogenetic trees embedded (displayed) in a given phylogenetic network. From this viewpoint, there has been a variety of studies including the small maximum parsimony problem for phylogenetic networks~\cite{nak05}, deciding if a phylogenetic network is (uniquely) determined by the phylogenetic trees it embeds~\cite{gam12, wil11}, counting the number of phylogenetic trees displayed by a phylogenetic network~\cite{lin13}, and determining if a phylogenetic network embeds a phylogenetic tree more than once~\cite{cor14}.
% and the flurry of recent investigations of so-called tree-based networks, for example, \cite{fra18, fra15, hay16, jet18, sem16, zha16}.
In this context, one of the most well-known studied computational problems is {\sc Tree-Containment}. Here, the problem is deciding whether or not a given phylogenetic tree is embedded in a given phylogenetic network. In general, the problem is NP-complete~\cite{kan08}, but it has been shown to be decidable in polynomial-time for several prominent classes of phylogenetic networks~\cite{bor16a, gun17, ier10}.}

\blue{Recently posed in~\cite{gun17} for reticulation-visible networks, in this paper we study a natural variation of {\sc Tree-Containment}. In particular, we consider the problem of deciding whether or not two given phylogenetic networks embed the same set of phylogenetic trees. Called {\sc Display-Set-Equivalence}, we recently showed that, in general, this problem is $\Pi^P_2$-complete~\cite{doe18}, that is, complete for the second-level of the polynomial hierarchy. Problems on the second level of the hierarchy are computationally more difficult than problems on the first level which include all NP- and co-NP-complete problems. For further details, see~\cite{sto76}. In contrast, the main result of this paper shows that there is a polynomial-time algorithm for {\sc Display-Set-Equivalence} if the given networks are normal and tree-child. The rest of the introduction formally defines {\sc Display-Set-Equivalence}, states the main result, and provides additional details.}

A {\em phylogenetic network $\cN$ on $X$} is a rooted acyclic directed graph with no arcs in parallel and satisfying the following properties:
\begin{enumerate}[(i)]
\item the (unique) root has out-degree two;

\item a vertex with out-degree zero has in-degree one, and the set of vertices with out-degree zero is $X$; and

\item all other vertices have either in-degree one and out-degree two, or in-degree two and out-degree one.
\end{enumerate}
For technical reasons, if $|X|=1$, we additionally allow a single vertex labelled by the element in $X$ to be a phylogenetic network. The vertices in $\cN$ of out-degree zero are called {\em leaves}, and so $X$ is referred to as the {\em leaf set} of $\cN$. Furthermore, vertices of in-degree one and out-degree two are {\em tree vertices}, while vertices of in-degree two and out-degree one are {\em reticulations}. The arcs directed into a reticulation are {\em reticulation arcs}. A {\em phylogenetic $X$-tree} is a phylogenetic network with no reticulations. \blue{Note that, in the literature, what we have called a phylogenetic network is sometimes referred to as a binary phylogenetic network.}

Let $\cN$ be a phylogenetic network. A reticulation arc $(u, v)$ of $\cN$ is a {\em shortcut} if there is a directed path in $\cN$ from $u$ to $v$ that does not traverse $(u, v)$. We say that $\cN$ is a {\em tree-child network} if every non-leaf vertex is the parent of a tree vertex or a leaf. If, in addition, $\cN$ has no shortcuts, then $\cN$ is {\em normal}. To illustrate, in Fig.~\ref{example2}(i), $\cN$ is a tree-child network but it is not normal as the arc $(u, v)$ is a shortcut. As with all other figures in the paper, arcs are directed down the page.

\begin{figure}
\center
\input{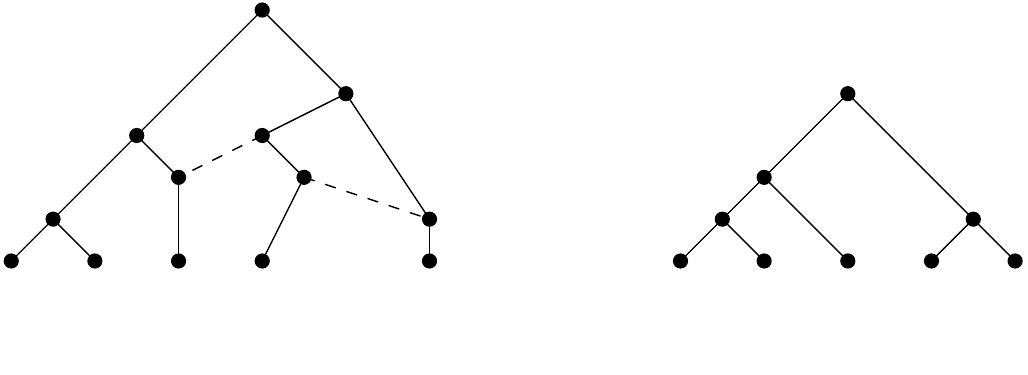_t}
\caption{(i) A tree-child network $\cN$ on $\{x_1, x_2, x_3, x_4, x_5\}$ and (ii) a phylogenetic tree $\cT$ displayed by $\cN$.}
\label{example2}
\end{figure}

Now let $\cN$ be a phylogenetic network on $X$ and let $\cT$ be a phylogenetic $X$-tree. Then $\cN$ {\em displays} $\cT$ if, up to suppressing vertices of in-degree one and out-degree one, $\cT$ can be obtained from $\cN$ by deleting arcs and non-root vertices, in which case, the resulting acyclic directed graph is an {\em embedding} of $\cT$ in $\cN$. In Fig.~\ref{example2}, $\cN$ displays $\cT$, where an embedding of $\cT$ in $\cN$ is shown as solid arcs. Note that there is one other distinct embedding of $\cT$ in $\cN$. Suppose $\cS$ is an embedding of $\cT$ in $\cN$. If $(u, v)$ is an arc in $\cN$, we say $\cS$ {\em uses} $(u, v)$ if $(u, v)$ is an arc in $\cS$; otherwise, $\cS$ {\em avoids} $(u, v)$. Furthermore, note that the root of $\cS$ is the root of $\cN$, and so the former may have out-degree one. The set of phylogenetic $X$-trees displayed by $\cN$, called the {\em display set} of $\cN$, is denoted by $T(\cN)$.

The problem of interest in this paper is the following decision problem:

\noindent {\sc Display-Set-Equivalence} \\
\noindent {\bf Input.} Two phylogenetic networks $\cN$ and $\cN'$ on $X$. \\
\noindent {\bf Output.} Is $T(\cN)=T(\cN')$?

\noindent It is shown in~\cite{doe18} that, in general, {\sc Display-Set-Equivalence} is $\Pi^P_2$-complete. In contrast, the main result of this paper shows that this decision problem is solvable in polynomial time if $\cN$ is normal and $\cN'$ is tree-child. In particular, we have

\begin{theorem}
Let $\cN$ and $\cN'$ be normal and tree-child networks on $X$, respectively. Then deciding if $T(\cN)=T(\cN')$ can be done in time quadratic in the size of $X$.
\label{main}
\end{theorem}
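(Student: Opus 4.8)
The plan is to reduce the equivalence test to a structural comparison rather than to any enumeration of displayed trees, since $T(\cN)$ and $T(\cN')$ may each contain exponentially many trees. Concretely, I would aim to prove the stronger statement that, for a normal network $\cN$ and a tree-child network $\cN'$ on $X$, we have $T(\cN)=T(\cN')$ if and only if $\cN'$ is itself normal and isomorphic to $\cN$. Granting this, the algorithm is immediate: first test whether $\cN'$ contains a shortcut, which decides normality and takes time linear in the number of arcs; if $\cN'$ is normal, then test whether $\cN$ and $\cN'$ are isomorphic. Because a tree-child (hence also a normal) network on $X$ has $O(|X|)$ vertices and arcs, the rooted isomorphism test can be performed by matching vertices from the leaves upwards, comparing the induced leaf-descendant sets, in time quadratic in $|X|$; this yields the stated bound.

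The forward (``if'') direction of the characterisation is immediate, so the work lies in the converse. The central tool I would develop is that a normal network is determined by its display set, together with a constructive, tree-free way to recover its reticulation structure. The no-shortcut condition is what makes this possible: in a normal network every reticulation arc is used by some embedding, so for each reticulation $v$ the cluster carried by $v$ is witnessed, in the displayed trees, as attaching at each of the two parents of $v$, and the pattern of such attachments pins down the parents uniquely. I would phrase the comparison between $\cN$ and $\cN'$ directly, rather than reconstructing either network from its (exponentially large) display set, by considering a lowest reticulation on which the two networks could disagree and exhibiting a displayed tree that separates them.

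It then remains to rule out a non-normal tree-child network sharing a normal network's display set; that is, to show that the shortcut-free hypothesis on $\cN'$ in the characterisation is forced. Here I would take a shortcut $(u,v)$ of $\cN'$ with $v$ lowest, so that $u$ is a proper ancestor of the second parent $w$ of $v$, and argue that the two attachment positions of the cluster below $v$ --- high at $u$ and low at $w$ --- leave a fingerprint in $T(\cN')$ that no normal network on $X$ can reproduce, forcing $T(\cN)\neq T(\cN')$.

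The main obstacle, I expect, is exactly this last point: converting the presence of a shortcut into a concrete, checkable discrepancy in the display set. Proving that shortcuts are detectable, and doing so via a single well-chosen pair of displayed trees that differ only in the placement of one cluster, is the delicate step; the reconstruction and isomorphism arguments, by contrast, I expect to be routine given the $O(|X|)$ bound on network size and the rigidity that the tree-child and normal conditions impose on parents of reticulations.
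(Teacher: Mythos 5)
Your proposed characterisation --- that $T(\cN)=T(\cN')$ forces $\cN'$ to be shortcut-free and isomorphic to $\cN$ --- is false, and the paper itself exhibits a counterexample (Fig.~\ref{example}). Concretely, on $X=\{a,b,c\}$ let $\cN$ be the normal network with root $\rho$, children $p_a$ and $q$, arcs $(p_a,a)$, $(q,c)$, $(p_a,p_b)$, $(q,p_b)$, and reticulation $p_b$ with child $b$; and let $\cN'$ have root $u'_1$ with children $q'$ and $v'_1$, arcs $(q',p'_b)$, $(q',c)$, $(p'_b,b)$, $(p'_b,v'_1)$, and reticulation $v'_1$ with child $a$. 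Both networks display exactly the two trees $((a,b),c)$ and $(a,(b,c))$, yet $\cN'$ contains the shortcut $(u'_1,v'_1)$ (witnessed by the path $u'_1\to q'\to p'_b\to v'_1$), and $\cN'$ is not isomorphic to $\cN$: the reticulation sits above $b$ in $\cN$ but above $a$ in $\cN'$. Your algorithm would therefore answer \emph{No} on a \emph{Yes}-instance. The step you flagged as ``delicate'' --- converting a shortcut into a fingerprint in the display set --- is not delicate but impossible in general: trivial shortcuts never affect the display set (Lemma~\ref{shortshort}), and the shortcut $(u'_1,v'_1)$ above is not even trivial, so a preprocessing pass that deletes trivial shortcuts followed by an isomorphism test fails as well. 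Willson's result, which you correctly invoke, says only that two \emph{normal} networks with equal display sets are isomorphic; it does not extend to tree-child $\cN'$, and that failure is precisely the source of the paper's difficulty.

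This is why the paper does not attempt any isomorphism-based argument. Instead it runs a cherry reduction: after deleting trivial shortcuts, it finds a cherry or reticulated cherry $\{a,b\}$ of $\cN$ and compares the structure of $\cN'$ local to $a$ and $b$. Propositions~\ref{structure1} and~\ref{structure2} show that in the cherry case and in the reticulation--reticulation case the local structures must agree (with matching cluster and visibility sets), but Proposition~\ref{structure3} identifies a genuinely asymmetric case --- $\{a,b\}$ a reticulated cherry of $\cN$ while the parent of $b$ in $\cN'$ is a tree vertex --- in which $\cN'$ necessarily carries a non-trivial shortcut configuration (Fig.~\ref{noncherry}) and the networks are not locally isomorphic, yet equality of display sets can persist. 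Lemmas~\ref{algorithm1}--\ref{algorithm3} then show that deleting the appropriate leaf or arcs in each network preserves the answer, so the algorithm recurses; each of the $O(|X|)$ iterations costs $O(|X|)$ time, giving the quadratic bound. To repair your proposal you would have to replace the global isomorphism claim with some such case analysis accommodating the Fig.~\ref{noncherry} configurations; as written, the core claim on which everything rests is simply untrue.
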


Before continuing, we add some remarks. The proof of Theorem~\ref{main} turned out to be much longer than we originally anticipated. If $\cN'$ has no shortcuts, that is, $\cN'$ is normal, then $T(\cN)=T(\cN')$ if and only if $\cN$ is isomorphic to $\cN'$~\cite{wil11}. However, if $\cN'$ is allowed to have shortcuts, then it is possible for $T(\cN)=T(\cN')$, but $\cN$ is not isomorphic to $\cN'$. \blue{For example, consider the normal and tree-child networks $\cN$ and $\cN'$, respectively, shown in Fig.~\ref{example}. Clearly, $\cN$ is not isomorphic to $\cN'$, but it is easily checked that $T(\cN)=T(\cN')$. While we already knew of instances like that shown in Fig.~\ref{example},} we didn't expect the allowance of shortcuts to raise as many hurdles as it did. \blue{As an aside, we note a related result concerning level-$1$ networks. A phylogenetic network is {\em level-$1$} if no vertex is in two distinct underlying cycles. It is shown in~\cite{gam12} that if $\cN$ and $\cN'$ are level-$1$ networks on $X$ with no underlying cycles of at most four vertices, then $T(\cN)=T(\cN')$ if and only if $\cN$ is isomorphic to $\cN'$.}

\begin{figure}
\center
\input{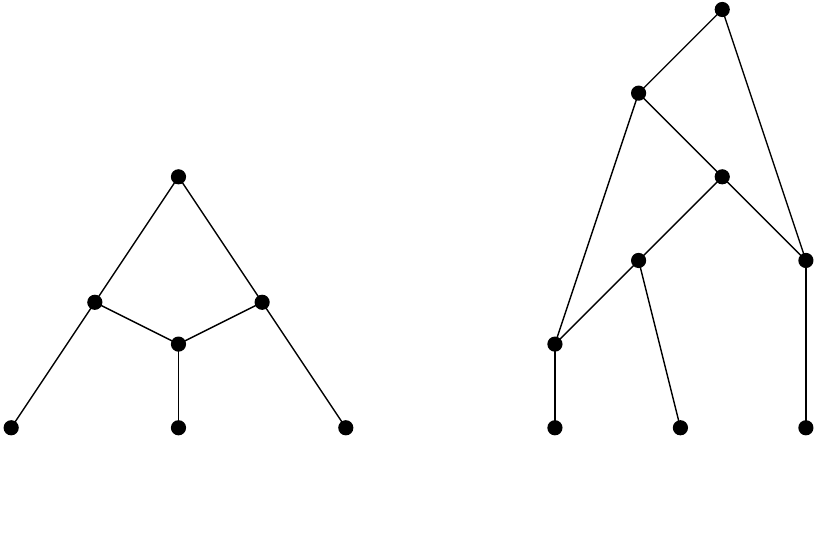_t}
\caption{A normal network $\cN$ and a tree-child network $\cN'$. Now $T(\cN)=T(\cN')$, but $\cN$ is not isomorphic to $\cN'$.}
\label{example}
\end{figure}

\blue{As mentioned above, {\sc Display-Set-Equivalence} was recently posed for when $\cN$ and $\cN'$ are both reticulation-visible. For the reader familiar with the notion of `visibility' (see Section~\ref{prelim}), a phylogenetic network is {\em reticulation-visible} if every reticulation is visible. Tree-child networks are a special subclass of reticulation-visible networks. In particular, tree-child networks are precisely the class of networks in which every vertex is visible~\cite{car09}. Knowing the hurdles to resolve in establishing Theorem~\ref{main}, we pose the apparent simpler problem of determining the complexity of {\sc Display-Set-Equivalence} when $\cN$ and $\cN'$ are both tree-child.}

The paper is organised as follows. Section~\ref{prelim} contains some additional concepts as well as several lemmas concerning tree-child networks. The proof of Theorem~\ref{main} is algorithmic and relies on comparing the structures of $\cN$ and $\cN'$ local to a common pair of leaves. Section~\ref{structure} establishes the necessary structural results to make these comparisons. Depending on the outcomes of the comparisons, the algorithm recurses in one of three ways. The lemmas associated with these recursions are given in Section~\ref{recur}. The algorithm, its correctness, and its running time, and thus the proof of Theorem~\ref{main}, are given in the last section. A more detailed overview of the algorithm underlying the proof of Theorem~\ref{main} is given at the end of the next section.

\section{Preliminaries}
\label{prelim}

Throughout the paper, $X$ denotes a non-empty finite set and all paths are directed. \blue{Furthermore, if $D$ is a set and $b$ is an element, we write $D\cup b$ for $D\cup \{b\}$ and $D-b$ for $D-\{b\}$.}

\noindent {\bf Cluster and visibility sets.} Let $\cN$ be a phylogenetic network on $X$ with root $\rho$, and let $u$ be a vertex of $\cN$. A vertex $v$ is {\em reachable} from $u$ if there is a path from $u$ to $v$. The set of leaves reachable from $u$, denoted $C_u$, is the {\em cluster (set) of $u$}. Furthermore, we say that $u$ is {\em visible} if there is a leaf, $x$ say, such that every path from $\rho$ to $x$ traverses $u$, in which case, {\em $x$ verifies the visibility of $u$}. The set of leaves verifying the visibility of $u$, denoted $V_u$, is the {\em visibility set of $u$}. Note that the visibility set of $u$ is a subset of the cluster set of $u$.

Let $\cT$ be a phylogenetic $X$-tree. A subset $C$ of $X$ is a {\em cluster of $\cT$} if there is a vertex $u$ in $\cT$ such that $C=C_u$. For disjoint subsets $Y$ and $Z$ of $X$, we say that $\{Y, Z\}$ is a {\em generalised cherry} of $\cT$ if $Y$, $Z$, and $Y\cup Z$ are all clusters of $\cT$.

\noindent {\bf Normal and tree-child networks.} Let $u$ be a vertex of a phylogenetic network $\cN$ on $X$. A path $P$ starting at $u$ and ending at a leaf is a {\em tree-path} if every non-terminal vertex is a tree vertex, in which case, $P$ is a {\em tree-path for $u$}. The next lemma is freely-used throughout the paper. \blue{Part~(ii)} is well-known and follows immediately from the definition of a tree-child network, \blue{and (iii) was noted in the introduction}.

\begin{lemma}
\blue{Let $\cN$ be a phylogenetic network. Then the following statements are equivalent:}
\begin{enumerate}[{\rm (i)}]
\item $\cN$ is tree-child,

\item every vertex of $\cN$ has a tree-path, and

\item \blue{every vertex of $\cN$ is visible.}
\end{enumerate}
\label{tree-child}
\end{lemma}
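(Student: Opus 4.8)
The plan is to prove the three-way equivalence by establishing a cycle of implications, say (i) $\Rightarrow$ (ii) $\Rightarrow$ (iii) $\Rightarrow$ (i), which is cleaner than proving all direct equivalences. Since the paper already notes that (ii) is well-known and (iii) was discussed in the introduction (citing \cite{car09}), the real work is to exhibit the connecting arguments carefully. Throughout, let $\rho$ denote the root of $\cN$.

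\textbf{(i) $\Rightarrow$ (ii).} First I would show that in a tree-child network every vertex has a tree-path. I would argue by constructing the path greedily from $u$ downward. By the definition of tree-child, every non-leaf vertex is the parent of a tree vertex or a leaf; so from $u$ I select such a child, from that child I again select a tree-vertex-or-leaf child, and so on. Each step strictly increases depth in the acyclic directed graph, so the process terminates, and it can only terminate at a leaf. Every non-terminal vertex on the resulting path was chosen to be a tree vertex (or we stopped), so the path is a tree-path for $u$. The only point requiring a word of care is that a tree vertex has out-degree two, so it genuinely has a child to continue to, and the process cannot get stuck at an interior reticulation.

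\textbf{(ii) $\Rightarrow$ (iii).} Next I would show that if every vertex has a tree-path, then every vertex is visible. Fix a vertex $u$ and let $P$ be a tree-path for $u$ ending at a leaf $x$. I claim $x$ verifies the visibility of $u$, i.e.\ every path from $\rho$ to $x$ traverses $u$. The key observation is that the internal vertices of $P$ (other than possibly $u$ itself) are all tree vertices, hence each has in-degree one, so the only arc entering each such vertex is the one coming from its predecessor on $P$. Reading this backwards from $x$, any path reaching $x$ is forced, vertex by vertex, to use exactly the arcs of $P$ in reverse until it arrives at $u$; thus every path from $\rho$ to $x$ must pass through $u$, so $u$ is visible with $x$ verifying its visibility.

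\textbf{(iii) $\Rightarrow$ (i).} Finally I would show that if every vertex is visible, then $\cN$ is tree-child, which I expect to be the main obstacle since it runs against the grain of the construction. I would argue by contraposition: suppose $\cN$ is not tree-child, so some non-leaf vertex $w$ has all of its children being reticulations. Each such reticulation child $r$ has in-degree two, so $r$ has another parent $w'\neq w$. The aim is to build, for an arbitrary leaf $x$, a path from $\rho$ to $x$ that avoids $w$, thereby contradicting visibility of $w$. The delicate part is routing around $w$: whenever a path would be forced through $w$ to reach a descendant leaf, that descendant lies below some reticulation child $r$ of $w$, and one must use the alternate parent $w'$ of $r$ to enter the subnetwork below $w$ without passing through $w$ itself. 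Making this precise requires checking that such a detour always exists and terminates, and handling the acyclicity so the rerouted walk is genuinely a directed path; this case analysis, rather than the two easier implications, is where the argument demands the most attention.
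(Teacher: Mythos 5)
Your implications (i)$\Rightarrow$(ii) and (ii)$\Rightarrow$(iii) are correct and complete: the greedy descent through tree-vertex-or-leaf children, and the backward in-degree-one argument along a tree-path (leaves also have in-degree one, which handles the final step into $x$), are exactly the standard arguments. For comparison, the paper proves none of this: it declares (i)$\Leftrightarrow$(ii) immediate from the definition and refers to~\cite{car09} for (i)$\Leftrightarrow$(iii), so you are attempting strictly more than the paper does. The genuine gap is in (iii)$\Rightarrow$(i), which you present only as a plan and yourself flag as unfinished --- and the plan as stated does not go through. First, the local detour fails: if $r$ is a reticulation child of $w$ with alternate parent $w'$, then $w'$ may itself be a strict descendant of $w$ (for instance, reachable only through the other reticulation child of $w$), so every path from $\rho$ to $w'$ may again traverse $w$; a single rerouting gains nothing, and iterating detours needs a termination argument you do not supply. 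Second, your target claim is false at the boundary: if $w=\rho$, then every path from $\rho$ trivially traverses $w$, so $w$ \emph{is} visible and no $w$-avoiding path to any leaf exists; the contradiction in that case must come from the network axioms, not from visibility.

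Here is one way to close the gap. Suppose some non-leaf vertex $w$ has only reticulation children, and let $D$ be the set of vertices $v$ such that every path from $\rho$ to $v$ traverses $w$; if $w$ were visible, a verifying leaf would lie in $D-w$. Observe that every $v\in D-w$ has \emph{all} of its parents in $D$: if some parent $p$ of $v$ had a $\rho$-to-$p$ path avoiding $w$, appending the arc $(p,v)$ would give a directed walk, hence by acyclicity a path, from $\rho$ to $v$ avoiding $w$. Also $\rho\notin D-w$, since the trivial path at $\rho$ avoids $w$ unless $w=\rho$. Now choose $m\in D-w$ earliest in a fixed topological order. Every parent of $m$ lies in $D$ but, by minimality, not in $D-w$, so every parent of $m$ equals $w$. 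Then $m$ is a child of $w$, hence by hypothesis a reticulation with two parents, both equal to $w$ --- contradicting that $\cN$ has no parallel arcs. So $D-w$ is empty, no leaf verifies $w$, and $w$ is not visible, contradicting (iii); when $w=\rho$ the same computation shows the configuration violates the axioms outright, so that case is vacuous. This dominated-set-plus-minimality argument (or simply citing~\cite{car09}, as the paper does) is what your step (iii)$\Rightarrow$(i) is missing.
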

\noindent It \blue{follows from Lemma~\ref{tree-child} that} all verifying sets of $\cN$ are non-empty.

Let $a$ and $b$ be distinct leaves of a phylogenetic network $\cN$, and let $p_a$ and $p_b$ denote the parents of $a$ and $b$, respectively. Then $\{a, b\}$ is a {\em cherry} if $p_a=p_b$. Furthermore, $\{a, b\}$ is a {\em reticulated cherry} if the parent of one of the leaves, say $b$, is a reticulation and $(p_a, p_b)$ is an arc in $\cN$. Note that, if this holds, then $p_a$ is a tree vertex. \blue{The arc $(p_a, p_b)$ is the {\em reticulation arc} of the reticulated cherry $\{a, b\}$.} As with the previous lemma, the next lemma~\cite{bor16b} is freely-used throughout the paper.

\begin{lemma}
Let $\cN$ be a tree-child network on $X$, where $|X|\ge 2$. Then $\cN$ has either a cherry or a reticulated cherry.
\label{cherries}
\end{lemma}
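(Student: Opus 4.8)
The plan is to exhibit the desired cherry or reticulated cherry at an extremal tree vertex of $\cN$. For each vertex $u$, let $h(u)$ denote the number of arcs on a longest directed path from $u$ to a leaf, so that leaves have height $0$ and $h(u)>h(w)$ whenever $w$ is a child of $u$. As $|X|\ge 2$, the network $\cN$ has a vertex of out-degree two. To streamline the argument I first observe that it suffices to treat a tree vertex: the case in which the only out-degree-two vertex is the root arises only when the two children of the root are either two leaves, or a leaf together with a reticulation whose child is a leaf, and this is handled directly to yield a cherry or reticulated cherry at the root. So assume $\cN$ has a tree vertex, and choose a tree vertex $v$ of minimum height $h(v)$ among all tree vertices.

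Next I analyse the two children $c_1,c_2$ of $v$. Because $h(c_i)<h(v)$ for each $i$, neither child can itself be a tree vertex, as this would contradict the minimality of $h(v)$. Hence each of $c_1,c_2$ is either a leaf or a reticulation. Since $\cN$ is tree-child, $v$ is the parent of a tree vertex or a leaf, and as neither child is a tree vertex, at least one child, say $c_1=a$, is a leaf.

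It remains to examine $c_2$. If $c_2$ is a leaf, then $c_1$ and $c_2$ share the parent $v$, so $\{c_1,c_2\}$ is a cherry. Otherwise $c_2$ is a reticulation, say $r$, with unique child $z$. Applying the tree-child property to the non-leaf vertex $r$, the vertex $z$ is a tree vertex or a leaf; but $h(z)<h(r)<h(v)$, so $z$ cannot be a tree vertex without contradicting the minimality of $h(v)$. Therefore $z=b$ is a leaf. Now $p_b=r$ is a reticulation, $p_a=v$, and $(p_a,p_b)=(v,r)$ is an arc of $\cN$, so $\{a,b\}$ is a reticulated cherry. In either case $\cN$ has a cherry or a reticulated cherry, as required.

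I expect the main obstacle to be precisely this final case, where the sibling $c_2$ of the leaf $a$ is a reticulation: a more naive argument (such as trying to extend a longest tree-path downwards) stalls here, because a tree-path cannot pass through the reticulation $r$, and the child of $r$ might a priori be a tree vertex rather than a leaf. The device that resolves this is the global height-minimality of $v$, which simultaneously forbids tree-vertex children of $v$ and a tree-vertex child of $r$, forcing both potentially troublesome vertices down to leaves. A secondary, more routine point to get right is the treatment of the root as a possible out-degree-two vertex in the degenerate situation where $\cN$ has no tree vertices at all.
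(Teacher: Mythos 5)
Your proof is correct, and it is essentially the standard argument: the paper does not prove this lemma itself but cites Bordewich and Semple~\cite{bor16b}, and the proof there (equivalently, the one the paper sketches in its running-time analysis, namely following a maximal directed path from the root, which ends at a lowest tree vertex whose two leaves below it form a cherry or a reticulated cherry) is the same minimal-height tree vertex argument you give. One minor remark: your degenerate subcase in which the root's second child is a reticulation with a leaf child is in fact vacuous---that reticulation would need a second, distinct parent, which a simple degree count (or the no-parallel-arcs condition) rules out when $\cN$ has no tree vertices---but since you resolve that subcase to a reticulated cherry anyway, this imprecision costs nothing.
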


The next lemma is established in~\cite{wil10}.

\begin{lemma}
Let $\cN$ be a normal network on $X$, and let $t$ and $u$ be vertices in $\cN$. Then $C_u\subseteq C_t$ if and only if $u$ is reachable from $t$.
\label{cluster1}
\end{lemma}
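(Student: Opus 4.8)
The plan is to prove the two implications separately, expecting the ``only if'' direction to carry all the difficulty and to be the only place where normality (as opposed to mere tree-child-ness) is used. For the ``if'' direction I would argue directly: if $u$ is reachable from $t$, fix a path $P$ from $t$ to $u$; given any leaf $x\in C_u$ there is a path $Q$ from $u$ to $x$, and since $\cN$ is acyclic the concatenation of $P$ and $Q$ is again a (directed) path, now from $t$ to $x$, whence $x\in C_t$. This gives $C_u\subseteq C_t$ and uses neither the tree-child property nor the absence of shortcuts.

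For the converse, suppose $C_u\subseteq C_t$. The engine of the argument is visibility. Since $\cN$ is tree-child, Lemma~\ref{tree-child} supplies a tree-path from $u$ to some leaf $x$; reading this path from $x$ upward and using that every internal vertex of a tree-path is a tree vertex (hence has in-degree one, as does the leaf $x$) shows that every path from the root $\rho$ to $x$ must traverse $u$. Thus $x$ verifies the visibility of $u$, and in particular $x\in C_u\subseteq C_t$, so there is a path $R$ from $t$ to $x$. Choosing also a path $S$ from $\rho$ to $t$, the concatenation of $S$ and $R$ is a path from $\rho$ to $x$ and so passes through $u$. If $u$ lies on the portion $R$ from $t$ to $x$, then $u$ is reachable from $t$ and we are done.

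The remaining case is that $u$ lies strictly on $S$, i.e. $t$ is reachable from $u$ with $u\neq t$; here the first direction already yields $C_t\subseteq C_u$, so $C_u=C_t$, and the task becomes to rule this out (forcing $u=t$) using the absence of shortcuts. I would first reduce to the case in which $t$ is a child of $u$, since the clusters of the vertices along a directed path from $u$ to $t$ are squeezed between $C_t$ and $C_u$ and are therefore all equal. I then examine the two children of $u$: the child not on the path to $t$ has its cluster contained in $C_t$, and tracing a leaf that verifies the visibility of that child back up to $\rho$ produces a directed path that realises one of the reticulation arcs at $u$ as a shortcut, contradicting normality. I expect precisely this last step --- extracting a shortcut from the cluster equality $C_u=C_t$ --- to be the main obstacle, as it is the unique point at which the no-shortcut hypothesis must be converted into a reachability statement, the tree-child property alone being insufficient.
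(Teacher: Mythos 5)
Your ``if'' direction is fine, and the first half of your ``only if'' argument (the leaf $x$ at the end of a tree-path for $u$ verifies the visibility of $u$, which forces $u$ onto the concatenated path $\rho\leadsto t\leadsto x$) is sound. The genuine gap is in the final step: after reducing to the case where $t$ is a child of $u$ and $C_u=C_t$, you ``examine the two children of $u$'' --- but $u$ may be a reticulation, which has only one child, so there is no off-path child $w$ from which to extract a shortcut. This is not a patchable omission, because in exactly that configuration the stated equivalence is false. Take the normal network with root $\rho$ having children $p$ and $q$, where $p$ has children $a$ (a leaf) and $r$, where $q$ has children $r$ and $b$ (a leaf), and where the reticulation $r$ has unique child $c$ (a leaf); this network is tree-child and has no shortcuts, yet $C_r=C_c=\{c\}$, so $C_r\subseteq C_c$ while $r$ is not reachable from $c$. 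Thus the lemma needs the extra hypothesis that $u$ is not a reticulation (or a conclusion weakened to allow $u$ to be a reticulation whose child is reachable from $t$). Note that the paper gives no proof of this lemma --- it is quoted from~\cite{wil10}, where the missing hypothesis is presumably present in some form --- and its sole application in the paper, in the proof of Lemma~\ref{nosplitting}, indeed has $u$ a tree vertex.

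With that hypothesis restored ($u$ a tree vertex, a leaf, or the root; the leaf case being trivial since $C_u\subseteq C_t$ then just says $u\in C_t$), your argument is correct, up to one sub-case left implicit: your shortcut extraction tacitly assumes the off-path child $w$ of $u$ is a reticulation. Taking a leaf $y$ verifying the visibility of $w$, so that $y\in C_w\subseteq C_u=C_t$, you get a path from $t$ through $w$; but if $w$ has in-degree one, this path can only enter $w$ through its unique in-arc $(u,w)$, so it passes through $u$ after leaving $t$, producing a directed cycle --- a contradiction with acyclicity, not with normality. So the correct dichotomy is: if $w$ is a reticulation, then $u\to t\leadsto w$ avoids $(u,w)$ and exhibits $(u,w)$ as a shortcut, contradicting normality; otherwise you contradict acyclicity (or, when $u=\rho$, the fact that $\rho$ has in-degree zero). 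Your instinct that converting $C_u=C_t$ into a shortcut is the crux, and the only point where the no-shortcut hypothesis enters, is right --- but the mechanism is unavailable at reticulations, which is precisely where the statement itself breaks.
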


Let $\cN$ be a phylogenetic network on $X$. Let $\mathcal S$ be an embedding in $\cN$ of a phylogenetic $X$-tree $\cT$ and let $C$ be a cluster of $\cT$. \blue{Analogous to cluster sets of $\cN$, each vertex $w$ of $\cS$ has a {\em cluster set} and this set consists of the elements in $X$ at the end of a path in $\cS$ starting at $w$. Of course, the cluster set of $w$ relative to $\cS$ is a subset of the cluster set of $w$ relative to $\cN$. The vertex in $\mathcal S$ {\em corresponding} to $C$ is the (unique) vertex $u$ whose cluster set relative to $\cS$ is $C$ and with the property that every other vertex with cluster set $C$ in $\cS$ is on a path from the root of $\cS$ to $u$.}

\begin{lemma}
Let $\cN$ be a normal network on $X$ and let $u$ be a tree vertex of $\cN$. Let $\cT$ be a phylogenetic $X$-tree having cluster $C_u$. If $\mathcal S$ is an embedding of $\cT$ in $\cN$, then the vertex in $\mathcal S$ corresponding to $C_u$ is $u$.
\label{nosplitting}
\end{lemma}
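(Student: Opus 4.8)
The plan is to identify the vertex of $\cS$ corresponding to $C_u$, call it $w$, and show $w=u$ by sandwiching $u$ and $w$ as mutual ancestors in the tree $\cS$. First I record two easy reductions. Since $u$ is a tree vertex with distinct children $u_1,u_2$ and $C_u=C_{u_1}\cup C_{u_2}$, Lemma~\ref{cluster1} forbids $C_{u_1}=C_{u_2}$ (that would force $u_1=u_2$), so $|C_u|\ge 2$; similarly $C_u=X=C_\rho$ would force $u=\rho$, so $C_u\subsetneq X$ and hence $w\ne\rho$. Next, because the cluster set of $w$ relative to $\cS$ is $C_u$, it is contained in the cluster set $C_w$ of $w$ relative to $\cN$, so $C_u\subseteq C_w$; by Lemma~\ref{cluster1} this gives statement (A): \emph{$u$ is reachable from $w$ in $\cN$}.

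I would then prove statement (B): \emph{$u$ is a vertex of $\cS$ and is reachable from $w$ in $\cS$}. By Lemma~\ref{tree-child}(iii) the vertex $u$ is visible, so its visibility set $V_u$ is non-empty; pick $x\in V_u\subseteq C_u$. As $x$ lies in the cluster set of $w$ relative to $\cS$, there is a path in $\cS$ from $w$ to $x$, and extending it upward gives a path in $\cS$ from $\rho$ to $x$. Being a path to $x$ in $\cN$, it must traverse $u$; hence $u$ lies in $\cS$, and by (A) it lies on the portion from $w$ to $x$, giving (B). With (A) and (B) in hand, the lemma reduces to the single statement (C): \emph{the cluster set of $u$ relative to $\cS$ equals $C_u$}. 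Indeed, granting (C), both $u$ and $w$ have cluster set $C_u$ relative to $\cS$; since $w$ is the corresponding vertex, $u$ lies on a path in $\cS$ from $\rho$ to $w$, so $u$ is an ancestor of $w$ in $\cS$, while (B) makes $u$ a descendant of $w$ in $\cS$, and acyclicity forces $u=w$.

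To prove (C), I argue by contradiction, assuming some leaf $y\in C_u$ is not reachable from $u$ in $\cS$. In the tree $\cS$, let $z$ be the last common vertex of the paths from $w$ to $u$ and from $w$ to $y$; then $z$ is a proper ancestor of $u$ in $\cS$ with out-degree two in $\cS$, hence (as $z\ne\rho$) a tree vertex of $\cN$, and its two children in $\cS$ are $z_1$, on the way to $u$, and $z_2$, on the way to $y$. By Lemma~\ref{tree-child}(ii) take a tree-path in $\cN$ from $z_2$ to a leaf $d$; a tree-path forces its endpoint to verify visibility of its start (each internal vertex has in-degree one), so $d$ verifies the visibility of $z_2$. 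Since the path in $\cS$ from $\rho$ to $d$ must traverse $z_2$, and the unique such path in the tree $\cS$ passes through $w$ and $z$, we get $d\in C_u$, so there is a path from $u$ to $d$ in $\cN$. Concatenating a path $\rho\to u$ with $u\to d$ yields a path to $d$ that must traverse $z_2$. If $z_2$ occurs after $u$ on it, then either $z_2$ is a tree vertex or leaf, whose only parent is $z$, forcing a path from $u$ to $z$ and hence a directed cycle with $z\to u$; or $z_2$ is a reticulation entered through its second parent $q$, and then $z\to z_1\to\cdots\to u\to\cdots\to q\to z_2$ is a path from $z$ to $z_2$ avoiding the reticulation arc $(z,z_2)$, i.e.\ a shortcut, contradicting normality.

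The one remaining possibility is that $z_2$ occurs before $u$, that is, $u$ is reachable from $z_2$ in $\cN$. Here the original configuration reappears one level down: $z_2$ reaches $u$ in $\cN$ and reaches $y$ in $\cS$ while avoiding $u$, so $(z_2,u)$ plays the role of $(w,u)$ along a strictly shorter portion of $\cS$. I would therefore set this up as a descent, inducting on the number of vertices on the path from $w$ to $u$ in $\cS$ (breaking ties by the number of reticulations below $u$), so that each occurrence of this case either terminates in the cycle/shortcut contradiction above or strictly decreases the parameter. I expect this descent to be the main obstacle: one must track precisely which arcs incident with the reconvergence reticulations lie in $\cS$ (using that $\cS$ is a tree, so no reticulation uses both of its in-arcs) to guarantee that the dichotomy genuinely produces a smaller instance rather than looping, and this bookkeeping is what makes the full argument long.
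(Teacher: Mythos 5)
Your skeleton is sound up to the last step: the reductions, (A), (B) via visibility of $u$, the equivalence of the lemma with (C), the choice of the branch vertex $z$ in the tree $\cS$, the fact that the endpoint $d$ of a tree-path from $z_2$ verifies the visibility of $z_2$ (hence $d\in C_u$), and the cycle/shortcut contradictions when $z_2$ occurs \emph{after} $u$ are all correct. The genuine gap is the final descent. In the remaining case ($u$ reachable from $z_2$ in $\cN$), the pair $(z_2,u)$ does \emph{not} reproduce the configuration of $(w,u)$: property (B) fails for it, since in the tree $\cS$ the vertex $u$ lies in the subtree below $z_1$, so there is no path from $z_2$ to $u$ in $\cS$ at all --- $u$ is reachable from $z_2$ only in $\cN$. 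Consequently your induction parameter, ``the number of vertices on the path from $w$ to $u$ in $\cS$,'' is undefined for the new pair, and the branch-vertex construction (which needs \emph{both} the path to $u$ and the path to $y$ to lie inside the tree $\cS$) cannot be repeated. As stated, the descent would fail, and you rightly flag it as the unresolved obstacle. In fact no descent is needed: in that case the suffix of your concatenated path from $u$ to $d$ avoids $z_2$, and since every path ending at $d$ must, traced backwards through the in-degree-one vertices of the tree-path $Q$ from $z_2$ to $d$, run up $Q$, the vertex $u$ must itself lie on $Q$. Then $u$'s unique in-arc forces the $\cS$-path $z\to z_1\to\cdots\to u$ to coincide with $Q$ traversed backwards, which either places $z$ on $Q$ (a directed cycle through $(z,z_2)$) or forces $z_2=z_1$ or $z_2$ to be an $\cS$-descendant of $z_1$ --- all impossible. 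So your route can be completed, but the completing argument is a direct backward-tracing one, not an induction.

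For comparison, the paper's proof is much shorter and works entirely at the top, with no case analysis: it lets $t$ be the corresponding vertex, takes the path $P$ in $\cN$ from $t$ to $u$ given by Lemma~\ref{cluster1}, and observes that if $t\neq u$ then (by no shortcuts) $t$ has a child $v$ off $P$. Taking the endpoint $\ell$ of a tree-path from $v$, normality (the same backward-tracing along the tree-path that you use for case (i)) gives $\ell\notin C_u$; on the other hand, minimality of $t$ forces $t$ to have out-degree two in $\cS$, so $(t,v)\in\cS$, and visibility of $v$ via $\ell$ forces the $\cS$-path from the root to $\ell$ through $v$, putting $\ell$ in the $\cS$-cluster of $t$, i.e.\ $\ell\in C_u$ --- an immediate contradiction. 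Your statements (A)--(C), the branch vertex $z$, and the appeal to the visibility of $u$ are thus avoidable; the paper applies the tree-path/visibility trick once, at $t$ itself, where the off-path child is guaranteed directly by normality.
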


\begin{proof}
Suppose that $\mathcal S$ is an embedding of $\cT$ in $\cN$. Let $t$ be the vertex in $\mathcal S$ corresponding to $C_u$, and observe that $t$ is a tree vertex. Clearly, $C_u\subseteq C_t$ and so, by Lemma~\ref{cluster1}, $u$ is reachable from $t$ on a path $P$ in $\cN$. If $t\neq u$, then, as $\cN$ is normal and therefore has no shortcuts, $t$ is the parent of a vertex, $v$ say, that is not on $P$. Now, there is a tree-path from $v$ to a leaf $\ell$. By construction, $\ell\not\in C_u$. In turn, regardless of whether or not $v$ is a reticulation, this implies that the cluster in $\mathcal S$ corresponding to $t$ contains $\ell$; a contradiction. Thus $t=u$, thereby completing the proof of the lemma.
\end{proof}

\noindent {\bf Deleting arcs and leaves.} Let $\cN$ be a phylogenetic network on $X$, and let $(u, v)$ be an arc of $\cN$. We denote the directed graph obtained from $\cN$ by deleting $(u, v)$ and suppressing any resulting vertices with in-degree one and out-degree one by $\cN\backslash (u, v)$. If $u$ is the root of $\cN$, we additionally delete $u$ (and its incident arc) after deleting $(u, v)$. Moreover, if $b$ is a leaf of $\cN$, then the directed graph obtained from $\cN$ by deleting $b$ (and its incident arc), and suppressing any resulting vertex of in-degree one and out-degree one is denoted by $\cN\backslash b$. Again, if the parent of $b$ is the root of $\cN$, we additionally delete the root (and its incident arc) after deleting $b$.

\blue{Deleting an arc or a leaf of a phylogenetic network doesn't necessarily result in another phylogenetic network. The next lemma, which is also freely used in the paper, gives some sufficient conditions for when these operations result in a phylogenetic network. The proof of this lemma for tree-child networks is established in~\cite{bor16b}. With that in hand, the proof of the lemma for normal networks is straightforward and omitted.}

\begin{lemma}
Let $\cN$ be a tree-child network on $X$.
\begin{enumerate}[{\rm (i)}]
\item \blue{Suppose $\{a, b\}$ is a cherry of $\cN$. Then $\cN\backslash b$ is a tree-child network on $X-b$. Moreover, if $\cN$ is normal, then $\cN\backslash b$ is normal.}

\item \blue{Suppose $(u, v)$ is the reticulation arc of a reticulated cherry. Then $\cN\backslash (u, v)$ is a tree-child network on $X$. Moreover, if $\cN$ is normal, then $\cN\backslash (u, v)$ is normal.}
\end{enumerate}
\label{deleting}
\end{lemma}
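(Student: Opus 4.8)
The plan is to treat both parts by a common template: each operation alters $\cN$ only locally, so to show the result is tree-child I will verify the defining condition of a tree-child network (by Lemma~\ref{tree-child}, that every non-leaf vertex is the parent of a tree vertex or a leaf) at the few vertices whose child set changes, and I will establish normality by a separate argument that no shortcut is created. The basic tool is that suppressing a vertex of in-degree one and out-degree one leaves the in-degree and out-degree of every other vertex unchanged; hence every vertex surviving either operation keeps its type (leaf, tree vertex, or reticulation), deleting arcs preserves acyclicity and creates no arcs in parallel (the contracted arcs end at leaves, which have in-degree one), and the degenerate root cases are handled by the stated conventions. These remarks reduce the phylogenetic-network axioms to the local checks that follow.

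For part~(i), let $p$ be the common parent of the cherry $\{a,b\}$; as the parent of two leaves, $p$ is a tree vertex, and deleting $b$ forces the suppression of $p$, after which the parent $q$ of $p$ (when it exists) becomes the parent of $a$. The only vertex whose child set changes is $q$, and its new child $a$ is a leaf, so the tree-child condition survives at $q$ while every other vertex keeps its witnessing child. If $p$ is the root, then $X=\{a,b\}$ and $\cN\backslash b$ is the single-vertex network on $\{a\}$. For part~(ii), write $u=p_a$ and $v=p_b$, so $u$ is a tree vertex, $v$ is a reticulation whose unique child is $b$, and $(u,v)$ is the reticulation arc. Deleting $(u,v)$ drops both $u$ and $v$ to in- and out-degree one, so both are suppressed: the parent $t$ of $u$ becomes the parent of $a$, and the second parent $w$ of $v$ becomes the parent of $b$. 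Only $t$ and $w$ change their child sets, and each acquires a leaf (namely $a$ and $b$) as a child, so the tree-child condition holds at both; note that $w$ is a tree vertex, since in a tree-child network the child of a reticulation cannot itself be a reticulation. This yields the tree-child assertions, in agreement with~\cite{bor16b}.

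For normality, suppose $\cN$ is normal and let $\cN'$ denote the resulting network; I must show $\cN'$ has no shortcut. The crucial observation is that none of the suppressed vertices is an in-neighbour of a surviving reticulation: in part~(i) the suppressed vertex $p$ has only leaf children, and in part~(ii) the suppressed vertices $u$ and $v$ have the leaf children $a$ and $b$ on the relevant sides. Consequently every reticulation $r$ of $\cN'$ has exactly the same two parents $p_1,p_2$ in $\cN'$ as in $\cN$. Moreover, deleting arcs and suppressing degree-two vertices creates no reachability: each arc of $\cN'$ either is an arc of $\cN$ or represents, through the suppressed vertices, a directed path of $\cN$, so if $y$ is reachable from $x$ in $\cN'$ then $y$ is reachable from $x$ in $\cN$. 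Now if a reticulation arc $(p_1,r)$ were a shortcut in $\cN'$, there would be a directed path from $p_1$ to $r$ avoiding $(p_1,r)$; since the only arcs into $r$ come from $p_1$ and $p_2$, that path passes through $p_2$, so $p_1$ reaches $p_2$ in $\cN'$, and hence in $\cN$. But then $(p_1,r)$ is a reticulation arc of $\cN$ and the path from $p_1$ to $p_2$ followed by the arc $(p_2,r)$ witnesses a shortcut in $\cN$, contradicting normality. Thus $\cN'$ has no shortcut and is normal.

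I expect the only delicate point to be the claim that the suppressed vertices are not in-neighbours of any surviving reticulation, for this is precisely what keeps the parent-sets of reticulations unchanged and lets the reachability-monotonicity argument close without tracking how suppression reroutes the arcs entering a reticulation. Everything else — the degree bookkeeping, acyclicity, absence of parallel arcs, and the root degeneracies — is routine given the type-preservation remark at the outset.
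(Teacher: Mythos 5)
Your proof is correct, but note that the paper itself offers no argument for this lemma: it cites~\cite{bor16b} for the tree-child assertions and declares the normal case ``straightforward and omitted.'' So your proposal is, in effect, the missing proof, and it follows the natural route. The local bookkeeping is right: in each part the only vertices whose child sets change acquire a leaf as a new child, suppression preserves the type of every surviving vertex, and the new arcs $(q,a)$, $(t,a)$, $(w,b)$ all end at leaves of in-degree one, so no parallel arcs arise. Your key observation for normality --- that no suppressed vertex ($p$ in part~(i); $u$, $v$ in part~(ii)) is an in-neighbour of a surviving reticulation, so every surviving reticulation keeps its two parents, and reachability in the reduced network implies reachability in $\cN$ --- is exactly what makes the shortcut-lifting argument close: a putative shortcut $(p_1,r)$ in the reduced network forces a path from $p_1$ to $p_2$ there, hence in $\cN$, and appending $(p_2,r)$ exhibits a shortcut in $\cN$, contradicting normality (the lifted path cannot pass through $r$, else $\cN$ would contain a directed cycle).

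Two small remarks, neither a gap. First, your justification that $w$ is a tree vertex is slightly garbled: $w$ is a \emph{parent} of $v$, not a child; the intended reasoning is that if $w$ were a reticulation, its unique child $v$ would be a reticulation, violating the tree-child condition --- and in any case this fact is not needed for your argument, since the tree-child condition at $w$ is witnessed by its new leaf child $b$. Second, in part~(ii) you appeal to ``the stated conventions'' for root degeneracies, but in fact the vertex $u=p_a$ can never be the root: if it were, every path from the root would end at the leaf $a$ or pass through $v$ to the leaf $b$, so the second parent $w$ of $v$ would be unreachable from the root, which is impossible. Making that explicit would tidy the one place where your bookkeeping silently assumes $t$ exists.
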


Let $\cN$ be a phylogenetic network. A shortcut $(u, v)$ in $\cN$ is {\em trivial} if the parent of $v$ that is not $u$ is a child of $u$.

\begin{lemma}
\blue{Let $\cN$ be a tree-child network} and let $(u, v)$ be a shortcut of $\cN$. \blue{Then $\cN\backslash (u, v)$ is tree-child. Furthermore, if $(u, v)$ is trivial, then $T(\cN)=T(\cN\backslash (u, v))$.}
\label{shortshort}
\end{lemma}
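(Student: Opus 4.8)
The plan is to treat the two assertions separately: first that $\cN\backslash(u,v)$ is tree-child, and then, under the triviality hypothesis, the equality of display sets. Throughout, write $v$ for the reticulation at the head of the shortcut and let $u$ and $w$ be its two parents. Since $\cN$ is tree-child, neither parent of a reticulation can itself be a reticulation (a reticulation has a single child, which would then fail to be a tree vertex or leaf), so both $u$ and $w$ are tree vertices; moreover, as $(u,v)$ is a shortcut there is a directed path from $u$ to $v$ avoiding $(u,v)$, and since $(w,v)$ is the only other arc into $v$, this path has the form $u\rightsquigarrow w\to v$.

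For the first assertion I would argue via Lemma~\ref{tree-child}(ii) that every vertex of $\cN\backslash(u,v)$ has a tree-path. The key observation is that the reticulation $v$ lies on no tree-path of $\cN$, being neither a leaf nor a tree vertex, and hence neither does the arc $(u,v)$. Consequently, for any vertex $z$ surviving in $\cN\backslash(u,v)$, a tree-path for $z$ in $\cN$ uses neither $v$ nor $(u,v)$; the only effect of forming $\cN\backslash(u,v)$ on such a path is the possible suppression of the tree vertex $u$ when $u$ is not the root, which merely shortens the path while keeping every internal vertex a tree vertex and the terminal vertex a leaf. In particular, $w$ retains a tree-path through its child other than $v$, and the child $v'$ of $v$, being a tree vertex or leaf, retains its own tree-path. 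Thus every vertex of $\cN\backslash(u,v)$ has a tree-path, so $\cN\backslash(u,v)$ is tree-child; that it is a phylogenetic network is a routine degree check in the spirit of Lemma~\ref{deleting}.

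For the second assertion, first note that any embedding $\cS$ of a tree in $\cN$ contains $v$ with in-degree at most one, since $\cS$ suppresses to a tree and so has no reticulation; hence $\cS$ uses at most one of the two arcs $(u,v)$ and $(w,v)$. It follows that $T(\cN)=T(\cN\backslash(u,v))\cup T(\cN\backslash(w,v))$, which already gives $T(\cN\backslash(u,v))\subseteq T(\cN)$. So it remains to show that every $\cT\in T(\cN)$ lies in $T(\cN\backslash(u,v))$; equivalently, that any embedding $\cS$ of $\cT$ in $\cN$ that uses $(u,v)$ can be replaced by an embedding of the same tree that avoids $(u,v)$, since the embeddings of $\cN$ avoiding $(u,v)$ are exactly the embeddings of $\cN\backslash(u,v)$.

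This is where triviality is essential: as $(u,v)$ is trivial, $(u,w)$ is an arc of $\cN$, so the detour $u\to w\to v$ has length two. Given an embedding $\cS$ using $(u,v)$ (and hence avoiding $(w,v)$), I would reroute through $w$. If $(u,w)\notin\cS$ then $w\notin\cS$, and replacing the arc $(u,v)$ by the two arcs $(u,w)$ and $(w,v)$ produces an embedding $\cS'$ in which $w$ has in- and out-degree one and is suppressed, recovering exactly $\cT$ while now avoiding $(u,v)$. If instead $(u,w)\in\cS$, then $w\in\cS$ with out-arc $(w,w')$ only, and replacing $(u,v)$ by $(w,v)$ makes $u$ a degree-two vertex that is suppressed while turning $w$ into the branching vertex, so $\cS'$ has the same leaf labels and the same topology as $\cS$, with the branch point merely relabelled from $u$ to $w$, and avoids $(u,v)$. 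I expect this second case to be the main obstacle: one must verify that grafting the $v$-subtree onto $w$ rather than $u$ genuinely returns the same phylogenetic tree and introduces no new reticulation or cycle. It is precisely the triviality of the shortcut, namely the single arc $(u,w)$, that keeps the reroute local enough that no additional vertex or arc of $\cN$ is pressed into service, which is what guarantees the displayed tree is unchanged; for a non-trivial shortcut the longer detour $u\rightsquigarrow w$ could force the use of arcs that alter it. In both cases $\cT\in T(\cN\backslash(u,v))$, completing the reverse inclusion and hence the equality.
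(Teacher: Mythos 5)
Your proof is correct and takes essentially the same route as the paper: the tree-child claim is shown via tree-paths, which can never traverse $(u,v)$ since $v$ can only ever be the initial vertex of a tree-path, and the display-set equality is obtained by rerouting any embedding using $(u,v)$ through the arc(s) $(u,w)$, $(w,v)$ supplied by triviality. The only (cosmetic) difference is that where you split into the cases $(u,w)\in\cS$ and $(u,w)\notin\cS$, the paper observes that since $\cN$ is tree-child the vertex $w$ (its $u'$) is visible and has unique in-arc $(u,u')$, so every embedding uses $(u,u')$ and only your second case actually arises.
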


\begin{proof}
\blue{Let $u'$ be the parent of $v$ that is not $u$. Since $\cN$ is tree-child, $u'$ is a tree vertex, and the unique child of $v$ is either a tree vertex or a leaf. It follows that $\cN\backslash (u, v)$ has no parallel arcs, and so $\cN\backslash (u, v)$ is a phylogenetic network. Furthermore, if $w$ is an arbitrary vertex of $\cN$, then any tree-path for $w$ does not traverse $(u, v)$ and so, as $\cN$ is tree-child, $\cN\backslash (u, v)$ is also tree-child.}

\blue{Now, regardless of whether $(u, v)$ is trivial, $T(\cN\backslash (u, v))\subseteq T(\cN)$. So assume that $(u, v)$ is trivial, in which case $(u, u')$ is an arc in $\cN$, and let $\cT$ be a phylogenetic tree displayed by $\cN$. Let $\cS$ be an embedding of $\cT$ in $\cN$. If $\cS$ avoids $(u, v)$, then it is clear that $\cN\backslash (u, v)$ displays $\cT$. On the other hand, if $\cS$ uses $(u, v)$, then by replacing $(u, v)$ with $(u', v)$ we obtain an embedding of $\cT$ in $\cN$ avoiding $(u, v)$, and so $\cN\backslash (u, v)$ displays $\cT$. Note that, as $\cN$ is tree-child, $\cS$ uses $(u, u')$. Hence $T(\cN)\subseteq T(\cN\backslash (u, v))$.}
\end{proof}

We end this section by briefly outlining the algorithm associated with the proof of Theorem~\ref{main}. Called {\sc SameDisplaySet}, the algorithm takes as its input normal and tree-child networks $\cN$ and $\cN'$, respectively, and proceeds by first finding a cherry or a reticulated cherry, $\{a, b\}$ say, in $\cN$. It then considers the structure of $\cN'$ (and if necessary $\cN$) local to leaves $a$ and $b$, and decides whether to return $T(\cN)\neq T(\cN')$ or to continue. This decision is based on three Propositions~\ref{structure1}, \ref{structure2}, and~\ref{structure3}. These propositions give necessary structural properties if $T(\cN)=T(\cN')$. If the algorithm continues, it deletes certain arcs and leaves in $\cN$ and $\cN'$. Lemma~\ref{algorithm1}--\ref{algorithm3} show that the resulting normal and tree-child networks after the deletions, $\cN_1$ and $\cN'_1$ say, display the same set of phylogenetic trees, that is $T(\cN_1)=T(\cN'_1)$, if and only if $T(\cN)=T(\cN')$. The algorithm now recurses on $\cN_1$ and $\cN'_1$ by finding a cherry or a reticulated cherry of $\cN_1$. Eventually, {\sc SameDisplaySet} either stops and returns $T(\cN)\neq T(\cN')$ or it reduces $\cN$ and $\cN'$ to a phylogenetic network consisting of two leaves, in which case $T(\cN)=T(\cN')$.

\section{Structural Properties}
\label{structure}

The purpose of this section is to establish three structural results, namely, Propositions~\ref{structure1}, \ref{structure2}, and~\ref{structure3}. Relative to either a cherry or a reticulated cherry, $\{a, b\}$ say, of $\cN$, these results concern the structures of $\cN$ and $\cN'$ local to $a$ and $b$ if $T(\cN)=T(\cN')$. The first considers when $\{a, b\}$ is a cherry of $\cN$, while the second and third considers when $\{a, b\}$ is a reticulated cherry of $\cN$ and where the parent of $b$ in $\cN'$ is either a tree vertex or a reticulation.

\begin{proposition}
Let $\cN$ and $\cN'$ be normal and tree-child networks on $X$, respectively, and suppose $\cN'$ has no trivial shortcuts. Let $\{a, b\}$ be a cherry of $\cN$. Then $T(\cN)=T(\cN')$ only if $\{a, b\}$ is a cherry of $\cN'$.
\label{structure1}
\end{proposition}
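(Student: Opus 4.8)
The plan is to prove the contrapositive in spirit: assuming $T(\cN)=T(\cN')$, I want to show that $\{a,b\}$ must be a cherry of $\cN'$. Since $\{a,b\}$ is a cherry of $\cN$, the parent $p$ of both $a$ and $b$ is a tree vertex with cluster set $C_p=\{a,b\}$ (at least containing $a$ and $b$; in fact equal, since the only leaves reachable from $p$ are $a$ and $b$). The first step is to exhibit a phylogenetic $X$-tree $\cT$ displayed by $\cN$ in which $\{a,b\}$ is a cherry, i.e.\ $\{a,b\}$ is a cluster of $\cT$. Such a $\cT$ exists because any embedding of a displayed tree will respect the tree vertex $p$: one can take any embedding of any displayed tree and read off that $a$ and $b$ share the parent $p$. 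Concretely, build an embedding of some displayed tree that uses both arcs $(p,a)$ and $(p,b)$; this is possible because $p$ is a tree vertex, and its two out-arcs lead to $a$ and $b$ directly. Hence $\cN$ displays a tree $\cT$ with cluster $\{a,b\}$, and since $T(\cN)=T(\cN')$, the network $\cN'$ also displays $\cT$.

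Next I fix an embedding $\cS$ of this $\cT$ in $\cN'$. Since $\{a,b\}$ is a cluster of $\cT$, there is a vertex $w$ in $\cS$ corresponding to the cluster $\{a,b\}$, and $w$ is a tree vertex of $\cS$ whose two descendant leaf sets (through its two children in $\cS$) partition $\{a,b\}$ into $\{a\}$ and $\{b\}$. The key question is whether the corresponding vertex $w$ in $\cN'$ is actually the common parent of $a$ and $b$. Here I would lean on the tree-child structure of $\cN'$: every vertex of $\cN'$ is visible and has a tree-path (Lemma~\ref{tree-child}). The idea is to argue that the paths in $\cN'$ from $w$ to $a$ and from $w$ to $b$ must be single arcs. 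If the path from $w$ to $a$ (respectively $b$) passed through an intermediate vertex $z$, then $z$ would have a tree-path to some leaf $\ell$, and by choosing $\cT$ appropriately — or by perturbing the displayed tree — I would force a leaf other than $a,b$ into the cluster $\{a,b\}$, contradicting that $\{a,b\}$ is exactly this cluster. This mirrors the argument in the proof of Lemma~\ref{nosplitting}, where an off-path tree-path leaf $\ell$ is driven into a cluster to reach a contradiction.

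The main obstacle, and the reason the hypothesis that $\cN'$ has \emph{no trivial shortcuts} is imposed, is ruling out the configuration where $a$ and $b$ do not literally share a parent in $\cN'$ but $\{a,b\}$ is nonetheless always displayed as a cherry because of a reticulation immediately below a tree vertex. Suppose the parent of $b$ in $\cN'$ were a reticulation $r$ with $\{a,b\}$ forming a reticulated cherry, and with the second parent of $r$ being a child of the parent $p'$ of $a$; that is exactly a trivial shortcut. In that degenerate case $\{a,b\}$ could still be a cluster of every displayed tree, yet $\{a,b\}$ is not a cherry of $\cN'$. The no-trivial-shortcut hypothesis excludes precisely this, so the plan is: having located $w$, use visibility and the tree-path lemma to show the two children of $w$ are $a$ and $b$ themselves, carefully invoking the absence of trivial shortcuts to eliminate the reticulated-cherry alternative. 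Once both children of $w$ are the leaves $a$ and $b$, the vertex $w$ is their common parent in $\cN'$ and $\{a,b\}$ is a cherry of $\cN'$, completing the argument.
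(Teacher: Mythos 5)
Your setup is sound and matches the paper in spirit: the observation that \emph{every} tree displayed by $\cN$ has $\{a,b\}$ as a cherry (because $a$ and $b$ share the tree vertex parent $p$), and the identification of the trivial-shortcut configuration as the reason the hypothesis is imposed, are both exactly the paper's starting points. The genuine gap is at your decisive step. You claim the paths in $\cN'$ from $w$ to $a$ and to $b$ can be shown to be single arcs by ``perturbing the displayed tree'' to force an extra leaf into the cluster, mirroring Lemma~\ref{nosplitting}. But the proof of Lemma~\ref{nosplitting} uses normality essentially: it needs the off-path child of an intermediate vertex not to be entered via a shortcut, so that the leaf at the end of its tree-path genuinely lies outside the cluster and is genuinely forced into it. In the merely tree-child network $\cN'$ this is precisely what can fail: if an intermediate vertex $z$ on the path from $w$ to $a$ or to $b$ is a reticulation whose second in-arc is a shortcut, then switching between the two in-arcs of $z$ need not put any leaf other than $a$ into $b$'s sibling cluster, and no contradiction arises --- the trivial-shortcut configuration you yourself describe is exactly such an instance, with $\{a,b\}$ a cherry of every displayed tree yet not a cherry of $\cN'$. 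You then dispose of this by asserting that the no-trivial-shortcut hypothesis ``excludes precisely this''; but the hypothesis excludes only \emph{trivial} shortcuts, so you still owe an argument that every non-trivial shortcut in such a position yields a tree in $T(\cN')\setminus T(\cN)$. That argument is the actual content of the paper's proof: when $p'_b$ is a tree vertex, $C_{p'_b}=\{a,b\}$ forces either a cherry or a reticulated cherry with reticulation leaf $a$; in the latter case the second parent $q'$ of $p'_a$ must head a shortcut (otherwise a tree-path from $q'$ to a leaf other than $b$, used together with $(q',p'_a)$, gives a displayed tree in which $\{a,b\}$ is not a cherry), and then inspecting the other child $t'$ of $q'$ shows $C_{t'}-a=\{b\}$, hence $t'=p'_b$, so the shortcut is trivial --- contradicting the hypothesis. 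Your proposal contains no counterpart to this chain, and it is where all the difficulty of the proposition lives.

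A smaller but real omission: you analyse only the configuration in which the parent of $b$ in $\cN'$ is a reticulation, whereas the paper's main degenerate case arises with $p'_b$ a tree vertex and the reticulation at $a$'s parent (so that $\{a,b\}$ is a reticulated cherry of $\cN'$ with reticulation leaf $a$); both cases --- $p'_b$ a tree vertex and $p'_b$ a reticulation --- must be run through the shortcut analysis above. As it stands, your proposal names the right obstacles but does not overcome them.
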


\begin{proof}
Suppose $T(\cN)=T(\cN')$. Note that $\{a, b\}$ is a cherry of every phylogenetic $X$-tree displayed by $\cN$. Let $p'_a$ and $p'_b$ denote the parents of $a$ and $b$ in $\cN'$, respectively. First assume that $p'_b$ is a tree vertex. Then, as $T(\cN)=T(\cN')$, it follows that $C_{p'_b}=\{a, b\}$. Thus the child vertex of $p'_b$ in $\cN'$ that is not $b$ is either $a$ or $p'_a$. In particular, $\{a, b\}$ is either a cherry or a reticulated cherry with reticulation leaf $a$ in $\cN'$. Consider the latter. If $q'$ denotes the parent of $p'_a$ that is not $p'_b$ in $\cN'$, then $(q', p'_a)$ is a shortcut. Otherwise, there is a tree-path from $q'$ to a leaf that is not $b$, and so, using $(q', p'_a)$, it follows that $\cN'$ displays a phylogenetic $X$-tree in which $\{a, b\}$ is not a cherry. Now let $t'$ denote the child vertex of $q'$ that is not $p'_a$. Since $\cN'$ is tree-child, $t'$ is a tree vertex. If \blue{$C_{t'}-a\neq \{b\}$}, then, using $(q', p'_a)$, we have that $\cN'$ displays a phylogenetic $X$-tree not displayed by $\cN$, so \blue{$C_{t'}-a=\{b\}$} and, in particular, $t'=p'_b$. Thus $\{q', p'_a\}$ is a trivial shortcut. Therefore if $p'_b$ is a tree vertex, then $\{a, b\}$ is a cherry of $\cN'$. If $p'_b$ is a reticulation in $\cN'$, then a similar argument leads to the conclusion that $\cN'$ has a trivial shortcut. This completes the proof of the proposition.
\end{proof}

We next consider the relative structure local to leaves $a$ and $b$ in $\cN$, where $\{a, b\}$ is a reticulated cherry of $\cN$. For the next three results, we suppose that $\{a, b\}$ is a reticulated cherry of $\cN$ as shown in Fig.~\ref{cherry1}. Note that, although not shown, if $C_q-(V_q\cup b)$ is nonempty, then $\cN$ contains paths from the root $\rho$ to leaves in $C_q-(V_q\cup b)$ avoiding $q$.

\begin{figure}
\center
\input{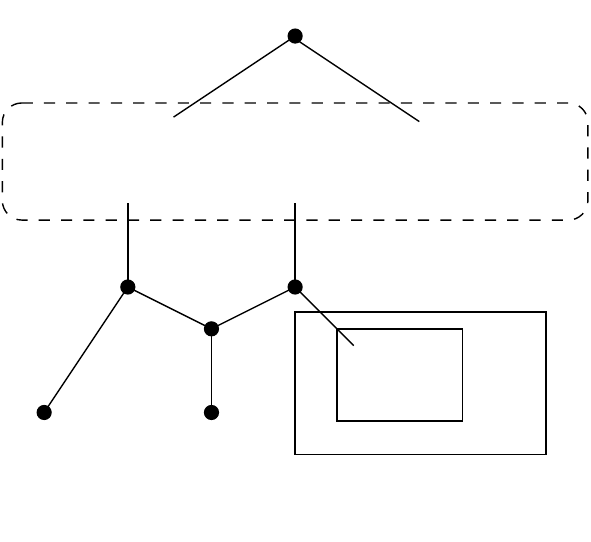_t}
\caption{The structure of $\cN$ local to the reticulated cherry $\{a, b\}$. Note that, for each leaf $\ell\in C_q-(V_q\cup b)$, there is a path from $\rho$ to $\ell$ avoiding $q$.}
\label{cherry1}
\end{figure}

The proof of the next lemma is straightforward and omitted.

\begin{lemma}
Let $\cN$ be a normal network on $X$, and suppose that $\{a, b\}$ is a reticulated cherry of $\cN$ as shown in Fig.~\ref{cherry1}. If $\cT$ is a phylogenetic $X$-tree displayed by $\cN$, then either $\{a, b\}$ is a cherry or $\{b, C'_q\}$ is a generalised cherry of $\cT$, where $V_q\subseteq C'_q\subseteq C_q-b$ \blue{and $a\not\in C_q$}. Moreover, $\cN$ displays phylogenetic $X$-trees in which $\{a, b\}$, $\{b, V_q\}$ and $\{b, C_q-b\}$ are generalised cherries.
\label{cherries2}
\end{lemma}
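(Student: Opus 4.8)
The plan is to analyze the structure of $\cN$ local to the reticulated cherry $\{a,b\}$ as depicted in Fig.~\ref{cherry1}, where $q$ is the parent of $p_b$ that is not $p_a$. The key observation is that any embedding $\cS$ of a displayed tree $\cT$ must make a binary choice at the reticulation $p_b$: it either uses the arc $(p_a, p_b)$ or the arc $(q, p_b)$, since $p_b$ has in-degree two and the embedding uses exactly one of its incoming arcs. I would organize the proof around this dichotomy. \emph{Case 1:} if $\cS$ uses $(p_a, p_b)$, then in $\cS$ the vertex $p_a$ is the parent of both $a$ and $b$ (after suppressing degree-two vertices), so $\{a,b\}$ is a cherry of $\cT$. \emph{Case 2:} if $\cS$ uses $(q, p_b)$ instead, then $b$ is reached via $q$, and I would argue that the cluster of $q$ in $\cS$ has the form $C'_q \cup b$ where $C'_q$ is the set of leaves reached from $q$ in $\cS$ other than through $p_b$.

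For the generalised-cherry conclusion in Case 2, I would show three things: first, that $b$ together with $C'_q$ forms a generalised cherry, i.e.\ that $C'_q$, $\{b\}$, and $C'_q \cup b$ are all clusters of $\cT$; second, that $V_q \subseteq C'_q \subseteq C_q - b$; and third, that $a \notin C_q$. The inclusion $C'_q \subseteq C_q - b$ is immediate since any leaf reached from $q$ in $\cS$ is reached from $q$ in $\cN$, and $b$ is excluded because in this case $b$ is reached via the arc $(q,p_b)$ so it appears as a separate part of the cherry. The inclusion $V_q \subseteq C'_q$ is the crucial point: if $x \in V_q$, then \emph{every} path from $\rho$ to $x$ traverses $q$ (by definition of the visibility set), so the embedding $\cS$ is forced to reach $x$ through $q$, and moreover it cannot reach $x$ through $p_b$ (as $p_b$'s child leads only to $b$), hence $x \in C'_q$. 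The fact that $a \notin C_q$ follows because $\cN$ is normal and therefore has no shortcuts: if $a$ were reachable from $q$, then since $p_a$ is a tree vertex that is the parent of both $a$ and $p_b$, one can trace a path producing a shortcut into $p_b$, contradicting normality.

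For the ``moreover'' statement, I would exhibit explicit embeddings realising the three extreme generalised cherries. Taking the embedding that uses $(p_a, p_b)$ yields a tree in which $\{a,b\}$ is a cherry. Taking the embedding that uses $(q, p_b)$ and, for each leaf $\ell \in C_q - (V_q \cup b)$, routes $\ell$ through a path from $\rho$ avoiding $q$ (which exists by the remark accompanying Fig.~\ref{cherry1}), one obtains $C'_q = V_q$ and hence the generalised cherry $\{b, V_q\}$. Conversely, taking the embedding that uses $(q,p_b)$ and routes every leaf of $C_q - b$ through $q$ gives $C'_q = C_q - b$ and the generalised cherry $\{b, C_q - b\}$. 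In each case I would verify that the chosen arc selections extend to a valid embedding, using that every vertex of the tree-child (indeed normal) network $\cN$ has a tree-path by Lemma~\ref{tree-child}, which guarantees the remaining leaves can be consistently reached.

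The main obstacle I anticipate is the careful bookkeeping in Case 2 showing that $V_q \subseteq C'_q$ is forced for \emph{every} embedding using $(q,p_b)$, as opposed to merely being achievable; this requires invoking the defining property of the visibility set at exactly the right moment, namely that visibility forces all root-to-$x$ paths through $q$, so no embedding can avoid routing $x$ through $q$. A secondary subtlety is ensuring the three claimed extreme embeddings are genuinely realisable as embeddings rather than just as abstract cluster assignments, which is where the existence of tree-paths and the absence of shortcuts in the normal network $\cN$ do the necessary work.
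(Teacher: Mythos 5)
The paper omits the proof of Lemma~\ref{cherries2} as straightforward, so there is no official argument to compare against; your dichotomy on which reticulation arc into $p_b$ an embedding uses is exactly the natural route, and the first half of your argument is sound. In particular, your use of visibility is placed correctly: every $x\in V_q$ must be routed through $q$ and cannot be reached via $p_b$, which gives $V_q\subseteq C'_q$ and, since $V_q\neq\emptyset$ in a tree-child network, also guarantees that $q$ has out-degree two in the embedding, so that the image of $q$ in $\cT$ really is the parent of $b$ and the generalised cherry $\{b,C'_q\}$ forms at $q$ (worth stating explicitly, since otherwise $q$ could be suppressed and the sibling cluster of $b$ could contain leaves outside $C_q$). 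Your shortcut argument for $a\notin C_q$ is also correct: a path from $q$ to $a$ would pass through $p_a$, the unique parent of $a$, and then $(q,p_b)$ would be a shortcut, contradicting normality.

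The one place where your justification is too thin is the realisability of the tree with generalised cherry $\{b,V_q\}$. You appeal to tree-paths (Lemma~\ref{tree-child}) to ``consistently reach the remaining leaves,'' but tree-paths only show that \emph{some} completion of the arc choices exists; they do not show that all leaves of $C_q-(V_q\cup b)$ can be routed around $q$ \emph{simultaneously}. The individual $q$-avoiding paths you invoke can conflict at a reticulation (one path entering via one parent, another via the other), and an embedding makes a single global choice of one in-arc per reticulation. The repair is short. Call a vertex $v$ \emph{$q$-dominated} if every path from $\rho$ to $v$ traverses $q$; then $V_q$ is exactly the set of $q$-dominated leaves. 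If a reticulation $v$ is not $q$-dominated, the last arc of a $q$-avoiding root-to-$v$ path comes from a parent that is itself not $q$-dominated, while if $v\neq q$ is $q$-dominated, both of its parents are $q$-dominated. Now take the switching that keeps $(q,p_b)$ at $p_b$ and, at every other reticulation that is not $q$-dominated, keeps an in-arc from a non-$q$-dominated parent (arbitrary choices elsewhere). Any such switching yields a displayed tree (a minimal unreachable vertex in a topological order would have a reachable parent joined by a retained arc, a contradiction), and an induction along a topological order shows that a vertex lies below $q$ in the resulting tree precisely when it is $q$-dominated, together with $b$ via the retained arc $(q,p_b)$; hence the leaves below $q$ are exactly $V_q\cup b$, giving the generalised cherry $\{b,V_q\}$. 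The same switching framework yields the other two extremes: keeping $(p_a,p_b)$ gives the cherry $\{a,b\}$, and keeping $(q,p_b)$ while, at each reticulation reachable from $q$, retaining an in-arc whose tail is reachable from $q$ gives the generalised cherry $\{b,C_q-b\}$.
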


\begin{figure}
\center
\input{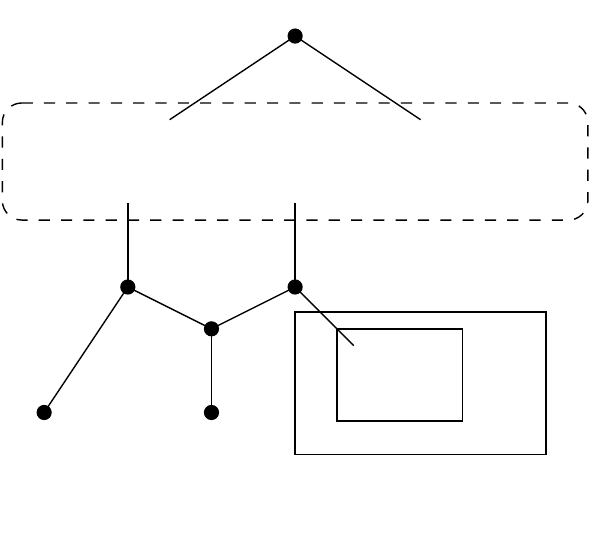_t}
\caption{The structure of $\cN'$ local to the leaves $a$ and $b$, when $\{a, b\}$ is the reticulated cherry of $\cN$ as shown in Fig.~\ref{cherry1} and the parent of $b$ in $\cN'$ is a reticulation. Here, $V_{q'_2}=V_q$ and $C_{q'_2}=C_q$. Note that, for each leaf $\ell\in C_{q'_2}-(V_{q'_2}\cup b)$, there is a path from $\rho'$ to $\ell$ avoiding $q'_2$.}
\label{cherry2}
\end{figure}

\begin{proposition}
Let $\cN$ and $\cN'$ be normal and tree-child networks on $X$, respectively, and suppose that $\{a, b\}$ is a reticulated cherry of $\cN$ as shown in Fig.~\ref{cherry1}. If the parent of $b$ in $\cN'$ is a reticulation, then $T(\cN)=T(\cN')$ only if, up to isomorphism, $\{a, b\}$ is a reticulated cherry of $\cN'$ as shown in Fig.~\ref{cherry2}, where $V_{q'_2}=V_q$ and $C_{q'_2}=C_q$.
\label{structure2}
\end{proposition}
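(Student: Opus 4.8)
The strategy is to use Lemma~\ref{cherries2} to record exactly which clusters can occur as the \emph{sibling group} of $b$ (the minimal cluster of a displayed tree that properly contains $\{b\}$) among the trees in $T(\cN)$, and then to recover the local structure of $\cN'$ around $b$ from the requirement $T(\cN')=T(\cN)$. By Lemma~\ref{cherries2}, in each tree displayed by $\cN$ the sibling group of $b$ is either $\{a, b\}$ or $\{b\}\cup C'_q$ for some $C'_q$ with $V_q\subseteq C'_q\subseteq C_q - b$; moreover the two extremes $\{b\}\cup V_q$ and $\{b\}\cup(C_q - b)$ are attained, and $a\notin C_q$, so $\{a, b\}$ is the \emph{only} sibling group of $b$ that contains $a$. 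Since $T(\cN)=T(\cN')$, the collection of sibling groups of $b$ arising in $\cN'$ is exactly this list.

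I would first fix the neighbourhood of $b$ in $\cN'$. Writing $p'_b$ for the parent of $b$, by hypothesis $p'_b$ is a reticulation; its unique child is $b$, and since $\cN'$ is tree-child neither parent of $p'_b$ can have out-degree one, so both parents are tree vertices, say $p'_a$ and $q'_2$ (the naming being justified below, and the two choices accounting for the phrase ``up to isomorphism''). Every tree displayed by $\cN'$ uses exactly one of the arcs $(p'_a, p'_b)$ and $(q'_2, p'_b)$, so the sibling groups of $b$ split into those realised through $p'_a$ and those realised through $q'_2$. The cherry $\{a, b\}$ is displayed, hence realised through one of the two parents; I label that parent $p'_a$. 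The aim of this step is to show that $p'_a$ really is the parent of $a$ and that $(p'_a, p'_b)$ is an arc, so that $\{a, b\}$ is a reticulated cherry of $\cN'$.

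Granting that, the final step identifies $q'_2$. Because $\{b\}\cup(C_q - b)$ is displayed and $a\notin C_q$, this maximal group must be realised through $q'_2$, forcing $C_q - b\subseteq C_{q'_2}$ and hence $C_q\subseteq C_{q'_2}$; conversely, every leaf of $C_{q'_2}$ occurs in some sibling group of $b$ realised through $q'_2$, so by Lemma~\ref{cherries2} it lies in $C_q - b$, giving $C_{q'_2}=C_q$. A symmetric argument, now comparing the \emph{minimal} $a$-free sibling group realisable through $q'_2$ with the minimal group $\{b\}\cup V_q$ supplied by Lemma~\ref{cherries2}, and using that $V_{q'_2}$ consists precisely of the leaves forced to accompany $b$ whenever the arc $(q'_2, p'_b)$ is used, yields $V_{q'_2}=V_q$. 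This matches $\cN'$ with Fig.~\ref{cherry2} and completes the proof.

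The step I expect to be the main obstacle is proving that the parent $p'_a$ is exactly the parent of $a$, that is, that $a$ is a \emph{child} of $p'_a$ rather than merely a descendant of it. The difficulty is the ``floating branch point'': an embedding of $\cN'$ may branch $b$ off its reticulation parent not at $p'_a$ itself but at some ancestor of $p'_a$, so realising $\{a, b\}$ as a cherry does not by itself place $a$ as an immediate sibling of $p'_b$. To convert the set-level data into the arc $(p'_a, p'_b)$, I would argue by contradiction: if $a$ were a proper descendant of $p'_a$, let $c$ be the child of $p'_a$ other than $p'_b$. Since $\cN'$ is tree-child, $c$ has a tree-path to some leaf, and one can then build a single embedding that branches $b$ off at $p'_a$ and routes the subtree below $c$ so as to contain $a$ together with a second leaf; this produces a sibling group of $b$ containing $a$ and at least one further leaf, contradicting the fact that $\{a, b\}$ is the unique sibling group of $b$ containing $a$. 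Hence $c = a$, the vertex $p'_a$ is the parent of $a$, and $(p'_a, p'_b)$ is the reticulation arc, giving the reduction to Fig.~\ref{cherry2}.
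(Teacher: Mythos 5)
Your overall strategy --- cataloguing the possible sibling groups of $b$ via Lemma~\ref{cherries2}, splitting them across the two reticulation arcs into $p'_b$, and then recovering $V_{q'_2}=V_q$ and $C_{q'_2}=C_q$ by mutual inclusion --- is also the paper's strategy, and your final step matches the paper's closing paragraph. But there are two genuine gaps, both tied to shortcuts, which the paper disposes of first in its sublemma~(\ref{noshortcut}) and which your write-up never mentions. First, your ``floating branch point'' argument rests on a false dichotomy. From the fact that some embedding displaying the cherry $\{a,b\}$ uses the arc $(p'_a,p'_b)$, you conclude that either $a$ is the child $c$ of $p'_a$ or $a$ is a proper descendant of $p'_a$. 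A third case remains: the branch vertex for the cherry can lie strictly above $p'_a$, with $p'_a$ a suppressed pass-through vertex whose arc $(p'_a,c)$ is unused, so that $a\notin C_c$ at all (note $C_{p'_a}=C_c\cup\{b\}$, so $a\in C_{p'_a}$ if and only if $a\in C_c$). In that case your rerouting construction has nothing to route below $c$ and yields no contradiction, yet the configuration must still be excluded, since the proposition asserts that a parent of $p'_b$ is literally the parent of $a$. The paper closes this case at the level of the clusters of \emph{both} parents: having first shown that neither $(q'_1,p'_b)$ nor $(q'_2,p'_b)$ is a shortcut, it argues that $C_{q'_i}-b=\{a\}$ must hold for some $i$ --- otherwise either no displayed tree has $\{a,b\}$ as a cherry, or some displayed tree has a generalised cherry $\{b,Z\}$ with $a\in Z$ and $|Z|\ge 2$ --- and only then does tree-childness force $q'_i=p'_a$. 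Your descendant argument is essentially one half of this dichotomy; the other half is missing.

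Second, your identification of $q'_2$ silently assumes $a\notin C_{q'_2}$, which is precisely the statement that $(q'_2,p'_b)$ is not a shortcut (once $p'_a$ is known to be $a$'s parent, $a\in C_{q'_2}$ forces a path from $q'_2$ through $p'_a$ to $p'_b$). Unlike Propositions~\ref{structure1} and~\ref{structure3}, this proposition does \emph{not} assume $\cN'$ has no trivial shortcuts, so the case is live. Your inference ``every leaf of $C_{q'_2}$ occurs in some sibling group of $b$ realised through $q'_2$, so by Lemma~\ref{cherries2} it lies in $C_q-b$'' is invalid for the leaf $a$: Lemma~\ref{cherries2} permits the sibling group $\{a\}$, and $a\notin C_q-b$. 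Concretely, if $(q'_2,p'_a)$ is an arc (a trivial shortcut), then $\{a,b\}$ is a genuine reticulated cherry of $\cN'$, your first step passes, and every sibling group of $b$ realisable in $\cN'$ through either arc is exactly $\{a\}$; the contradiction with $T(\cN)=T(\cN')$ then comes from a tree $\cN'$ \emph{fails} to display (the tree of $\cN$ with generalised cherry $\{b,V_q\}$, using $V_q\neq\emptyset$ and $a\notin C_q$), not from a forbidden sibling group that $\cN'$ produces. The non-trivial shortcut case needs a further construction (the paper exhibits a displayed tree with generalised cherry $\{b,Z\}$, $a\in Z$, $|Z|\ge 2$). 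Both of your main steps therefore need the no-shortcut sublemma proved up front, as in the paper.
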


\begin{proof}
Let $\{a, b\}$ be a reticulated cherry of $\cN$ as shown in Fig.~\ref{cherry1}. Thus $p_a$ and $p_b$ denote the parents of $a$ and $b$ in $\cN$, respectively, where $p_b$ is a reticulation, and $q$ denotes the parent of $p_b$ in $\cN$ that is not $p_a$. Since $\cN$ is normal, $(q, p_b)$ is not a shortcut and $a\not\in C_q$. Suppose $T(\cN)=T(\cN')$, and consider $\cN'$. Let $p'_a$ and $p'_b$ denote the parents of $a$ and $b$ in $\cN'$, respectively, where $p'_b$ is a reticulation. Let $q'_1$ and $q'_2$ denote the parents of $p'_b$ in $\cN'$.

\begin{sublemma}
Neither $(q'_1, p'_b)$ nor $(q'_2, p'_b)$ is a shortcut.
\label{noshortcut}
\end{sublemma}

\begin{proof}
Assume at least one of $(q'_1, p'_b)$ and $(q'_2, p'_b)$ is a shortcut. Without loss of generality, we may assume $(q'_2, p'_b)$ is a shortcut, and so $(q'_1, p'_b)$ is not a shortcut. \blue{By Lemma~\ref{cherries2}, $\cN'$ displays a phylogenetic $X$-tree with $\{a, b\}$ as a cherry as well as a phylogenetic $X$-tree with $\{b, V_q\}$ as a generalised cherry. Thus $V_q\cup a\subseteq C_{q'_2}$.} But then, using $(q'_2, p'_b)$, we have that $\cN$ displays a phylogenetic $X$-tree with a generalised cherry $\{b, Z\}$, where $|Z|\ge 2$ and $a\in Z$, contradicting Lemma~\ref{cherries2}. Hence neither $(q'_1, p'_b)$ nor $(q'_2, p'_b)$ is a shortcut.
\end{proof}

By (\ref{noshortcut}), neither $(q'_1, p'_b)$ nor $(q'_2, p'_b)$ is a shortcut. Therefore, for some $i\in \{1, 2\}$, we have $C_{q'_i}-b=\{a\}$. If not, then either there is no phylogenetic $X$-tree displayed by $\cN'$ with $\{a, b\}$ as a cherry, or there is a phylogenetic $X$-tree displayed by $\cN'$ in which $\{b, Z\}$ is a generalised cherry, where $|Z|\ge 2$ and $a\in Z$, contradicting Lemma~\ref{cherries2}. Without loss of generality, we may assume that $C_{q'_1}-b=\{a\}$ and so, as $\cN'$ is tree-child, $q'_1=p'_a$. That is, $\{a, b\}$ is a reticulated cherry of $\cN'$. Observe that $a\not\in C_{q'_2}-b$.

By Lemma~\ref{cherries2}, $\cN$ displays a phylogenetic $X$-tree with generalised cherry $\{b, V_q\}$ and so, as $T(\cN)=T(\cN')$, it follows that $V_q\subseteq C_{q'_2}-b$ and $V_{q'_2}\subseteq V_q$. In turn, as $\cN'$ displays a phylogenetic $X$-tree with generalised cherry $\{b, V_{q'_2}\}$, we have $V_{q'_2}\subseteq C_q-b$ and $V_q\subseteq V_{q'_2}$. Thus $V_q=V_{q'_2}$. Furthermore, as $\cN$ displays a phylogenetic $X$-tree with generalised cherry $\{b, C_q-b\}$, and $\cN'$ displays a phylogenetic $X$-tree with generalised cherry $\{b, C_{q'_2}-b\}$, we deduce that $C_q-b\subseteq C_{q'_2}-b$ and $C_{q'_2}-b\subseteq C_q-b$, so $C_q-b=C_{q'_2}-b$. Thus $\{a, b\}$ is a reticulated cherry of $\cN'$ as shown in Fig.~\ref{cherry2}, and this completes the proof of the proposition.
\end{proof}

\begin{figure}
\center
\input{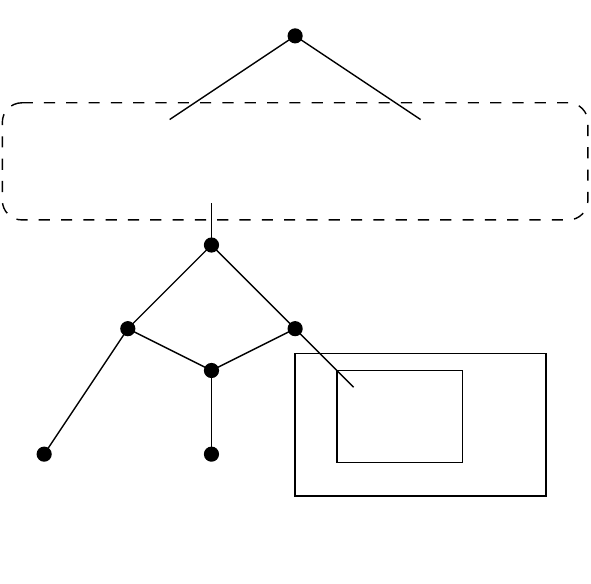_t}
\caption{The structure of $\cN$ local to the leaves $a$ and $b$, when $\{a, b\}$ is a reticulated cherry of $\cN$ as shown in Fig.~\ref{cherry1} and the parent of $b$ in $\cN'$ is a tree vertex. Note that, for each leaf $\ell\in C_q-(V_q-b)$, there is a path from $\rho$ to $\ell$ avoiding $q$.}
\label{cherry3}
\end{figure}

\begin{figure}
\center
\input{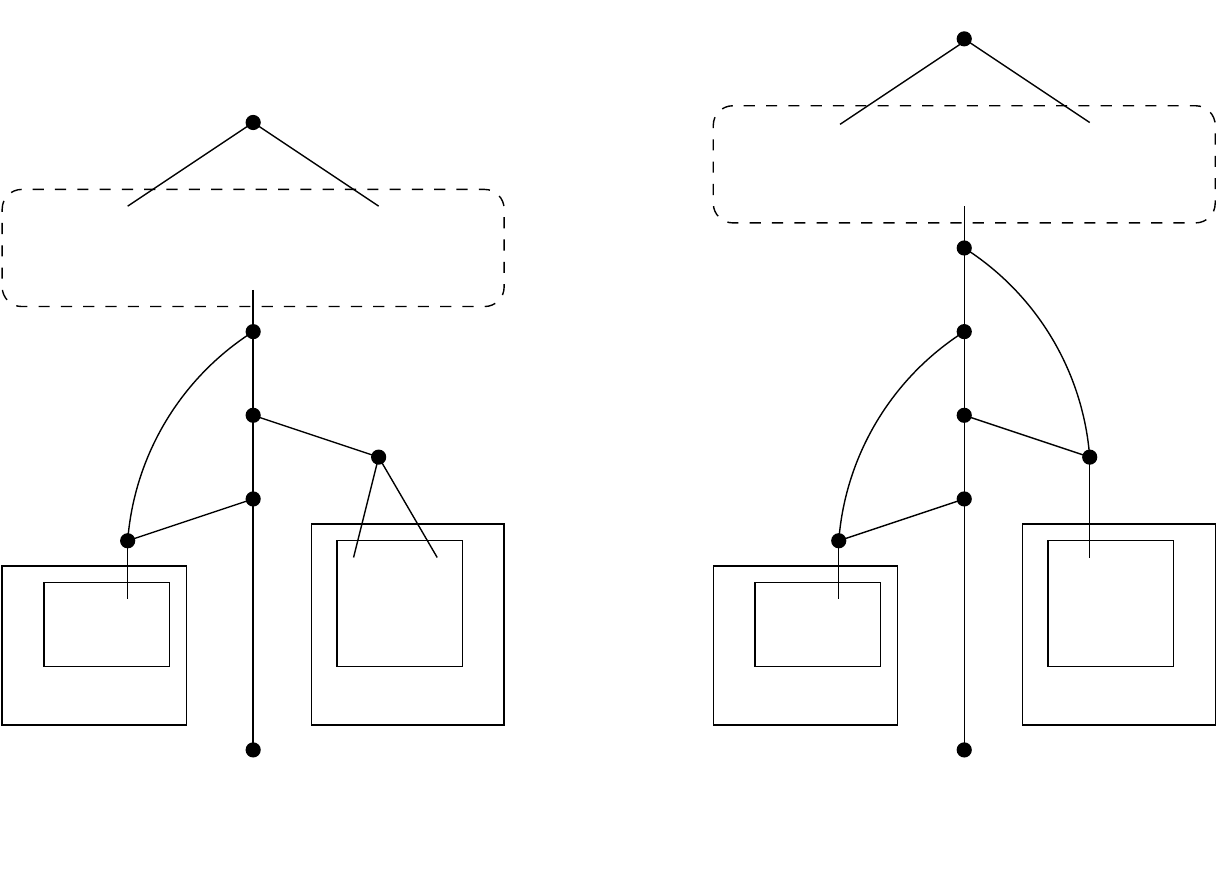_t}
\caption{The two possible structures of $\cN'$ local to the leaves $a$ and $b$, when $\{a, b\}$ is a reticulated cherry of $\cN$ as shown in Fig.~\ref{cherry1} and the parent of $b$ in $\cN'$ is a tree vertex, where $\{V_{v'_1}, V_{v'_2}\}=\{\{a\}, V_q\}$ and $\{C_{v'_1}, C_{v'_2}\}=\{\{a\}, C_q-b\}$. Note that, if $C_{v'_i}\neq \{a\}$, then, for each leaf $\ell\in C_{v'_i}-V_{v'_i}$, there is a path from $\rho'$ to $\ell$ avoiding $v'_i$. Furthermore, in (a), $v'_2$ could be a leaf.}
\label{noncherry}
\end{figure}

\begin{proposition}
Let $\cN$ and $\cN'$ be normal and tree-child networks on $X$, respectively, and suppose that $\cN'$ has no trivial shortcuts and $\{a, b\}$ is a reticulated cherry of $\cN$ as shown in Fig.~\ref{cherry1}. If the parent of $b$ in $\cN'$ is a tree vertex, then $T(\cN)=T(\cN')$ only if, up to isomorphism, in $\cN$, leaves $a$ and $b$ are as shown in Fig.~\ref{cherry3} and, in $\cN'$, leaves $a$ and $b$ are as shown in either Fig.~\ref{noncherry}(a) or Fig.~\ref{noncherry}(b), where $\{V_{v'_1}, V_{v'_2}\}=\{\{a\}, V_q\}$ and $\{C_{v'_1}, C_{v'_2}\}=\{\{a\}, C_q-b\}$.
\label{structure3}
\end{proposition}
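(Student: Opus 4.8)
The plan is to first pin down the local structure of $\cN'$ around $b$, and then read off the structure of $\cN$ from the trees this forces into the common display set. Write $p'_b$ for the (tree-vertex) parent of $b$ in $\cN'$ and let $v'$ be the other child of $p'_b$. Since $T(\cN)=T(\cN')$, Lemma~\ref{cherries2} applies to $\cN'$ as well: in every tree displayed by $\cN'$ the sibling cluster of $b$ is either $\{a\}$ or some $C'_q$ with $V_q\subseteq C'_q\subseteq C_q-b$ and $a\notin C'_q$. I would first use this to argue that $v'$ must be a reticulation. Indeed, if $v'$ were a tree vertex or a leaf, then (as $\cN'$ is tree-child) there is a tree-path from $v'$ to a leaf $\ell$, and $\ell$ is reachable only through the arc $(p'_b,v')$; hence every embedding uses $(p'_b,v')$ and the sibling cluster of $b$ always contains $\ell$. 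But $\cN'$ must realise both the sibling cluster $\{a\}$ and the sibling cluster $C_q-b$, and $a\notin C_q$, so no single $\ell$ can lie in both; this is impossible. Thus $v'$ is a reticulation with second parent $w'$, and $b$ groups with a pruning of $C_{v'}$ when $(p'_b,v')$ is used and groups elsewhere (with $p'_b$ suppressed) when $(w',v')$ is used.

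Next I would show that either $C_{v'}=\{a\}$ or $a\notin C_{v'}$. When $(p'_b,v')$ is used, the sibling cluster of $b$ can be \emph{any} admissible pruning of the subtree below $v'$; if that subtree could be pruned both to contain $a$ and to contain a second leaf, then $\cN'$ would display a tree whose $b$-sibling cluster contains $a$ together with another leaf, contradicting Lemma~\ref{cherries2}. In a tree-child network this separation forces $C_{v'}=\{a\}$ (so $v'$ sits over the leaf $a$) or $a\notin C_{v'}$, which are exactly the two configurations of Figs.~\ref{noncherry}(a) and~\ref{noncherry}(b). In each case I would run the same squeeze argument as in the proof of Proposition~\ref{structure2}: matching the displayed trees with $b$-sibling clusters $V_q$ and $C_q-b$ against those of $\cN$ forces the relevant side to satisfy $V_{v'}=V_q$ and $C_{v'}=C_q-b$ (or $\{a\}$ and $\{a\}$), and symmetrically for the side reached through $w'$, giving $\{V_{v'_1},V_{v'_2}\}=\{\{a\},V_q\}$ and $\{C_{v'_1},C_{v'_2}\}=\{\{a\},C_q-b\}$. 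The placement of $w'$ relative to $p'_b$, together with the assumptions that $\cN'$ has no trivial shortcut and is tree-child, is what distinguishes the two sub-figures, and ruling out every other placement — in particular those creating a trivial shortcut or a tree displayed by exactly one of $\cN,\cN'$ — is where I expect the bulk of the casework to lie.

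With $\cN'$ in the form of Fig.~\ref{noncherry}, the key consequence I would extract is that $\cN'$, and hence $\cN$, displays two specific trees: a tree $\cT^{\ast}$ in which $\{a,b\}$ is a cherry whose sibling cluster is exactly $C_q-b$ (so $\{a\}\cup C_q=\{a,b\}\cup(C_q-b)$ is a cluster splitting as $\{a,b\}\mid C_q-b$), and a tree in which $\{a\}\cup C_q$ splits as $\{a\}\mid C_q$ with $C_q$ in turn splitting as $\{b\}\mid C_q-b$. To recover the structure of $\cN$ in Fig.~\ref{cherry3} I would embed $\cT^{\ast}$ in $\cN$ and apply Lemma~\ref{nosplitting} twice: the cluster $\{a,b\}$ must correspond to the tree vertex $p_a$ (so the embedding uses $(p_a,p_b)$ and therefore avoids the reticulation arc $(q,p_b)$), while the cluster $C_q-b$ must correspond to the child $q_0$ of $q$ on the $C_q-b$ side, whose cluster is exactly $C_q-b$ because normality forbids $a\in C_q$ (so $b\notin C_{q_0}$). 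Since these two clusters are siblings in $\cT^{\ast}$ and since avoiding $(q,p_b)$ suppresses $q$ in the embedding, $p_a$ and $q$ are forced to share a common parent $t$, which is precisely Fig.~\ref{cherry3}.

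The main obstacle, besides the volume of casework in determining $\cN'$, is this final rigidity step: converting ``$p_a$ and the $C_q-b$ side are adjacent in $\cT^{\ast}$'' into the exact statement ``$p_a$ and $q$ have a common \emph{parent}''. The delicate point is that an embedding could in principle suppress the parent $s_a$ of $p_a$ or the parent $s_q$ of $q$ if their other children were reticulations. I would close this using normality of $\cN$ with Lemma~\ref{cluster1}: a tree-path argument shows $s_a$ cannot be suppressed without leaving a leaf unreached, and combining the two displayed trees above forces the least common ancestor of $p_a$ and $q$ to have cluster exactly $\{a\}\cup C_q$ and to have $p_a$ and $q$ as its two children.
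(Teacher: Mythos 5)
Your opening moves match the paper's: forcing $v'_1$ to be a reticulation, and the squeeze via Lemma~\ref{cherries2} giving $C_{v'_1}=\{a\}$ or $V_{v'_1}=V_q$ are exactly the paper's first step and its sublemma (\ref{sets1}). But from there you defer precisely what constitutes the bulk of the paper's proof. You never show that the parent $q'$ of $p'_b$ is a tree vertex --- the paper's sublemma (\ref{treevertex1}), a genuinely nontrivial argument which, in the case that $q'$ is a reticulation with one incoming shortcut, constructs a tree with generalised cherry $\{b, C'_q\}$, $V_q\subseteq C'_q\subset C_q$, together with cluster $C_q$, and then invokes Lemma~\ref{nosplitting} to show $\cN$ cannot display it. You also never establish the shortcut structure that actually characterises Fig.~\ref{noncherry}: that $(u'_1, v'_1)$ is a shortcut with $u'_1$ the \emph{parent of $q'$} (sublemma (\ref{shortcut2})), and, when $v'_2$ is a reticulation, that $(u'_2, v'_2)$ is a shortcut with $(u'_2, u'_1)$ an arc. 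Writing that this ``is where I expect the bulk of the casework to lie'' concedes the gap rather than filling it. You have also misread the dichotomy between the two sub-figures: Fig.~\ref{noncherry}(a) versus (b) is distinguished by whether $v'_2$ is a tree vertex (or leaf) or a reticulation, not by whether $C_{v'_1}=\{a\}$ or $a\notin C_{v'_1}$; either assignment of $\{\{a\}, V_q\}$ to $\{V_{v'_1}, V_{v'_2}\}$ can occur in each sub-figure.

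The final rigidity step for $\cN$ fails as proposed. Displaying $\cT^{\ast}$ and your second tree does \emph{not} force $(t, p_a)$ and $(t, q)$ to be arcs. Your unreached-leaf tree-path argument disposes only of intermediate \emph{tree} vertices on the paths (their off-path child pollutes the relevant cluster in any embedding). An intermediate \emph{reticulation} $u$ on the path from $t$ to $p_a$ or to $q$, with its other parent $s$ elsewhere in $\cN$, is simply suppressed in any embedding of those two trees: no leaf goes unreached, since leaves beyond $s$ are reached through the other child of $s$, and normality with Lemma~\ref{cluster1} yields no contradiction --- such an $\cN$ displays both of your trees while violating Fig.~\ref{cherry3}. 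The paper's sublemma (\ref{path1}) rules this out by a different mechanism: it chooses an embedding of a specific tree $\cT$ that is extremal with respect to the position of the vertex corresponding to the last common ancestor of $V_q\cup\{a, b\}$ (needed when $s$ lies on $P_q$), and, in the reticulation case, uses the tree-path from $s$ to a leaf $\ell_s\not\in\{a, b\}\cup C_q$ to build a tree displayed by $\cN$ but not by $\cN'$; crucially, this comparison consumes the already-established structure of $\cN'$ (which of $C_{v'_1}, C_{v'_2}$ equals $\{a\}$, and the arc $(u'_1, q')$). So the structure of $\cN$ in Fig.~\ref{cherry3} cannot be extracted from two displayed trees plus Lemmas~\ref{nosplitting} and~\ref{cluster1}; it genuinely requires the interplay with $\cN'$. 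A minor slip in the same paragraph: $b\not\in C_{q_0}$ for the other child $q_0$ of $q$ holds because a path from $q_0$ to $p_b$ would make $(q, p_b)$ a shortcut, not ``because normality forbids $a\in C_q$''.
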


\begin{proof}
Let $\{a, b\}$ be a reticulated cherry of $\cN$ as shown in Fig.~\ref{cherry1}, and suppose that $T(\cN)=T(\cN')$. Let $p'_b$ denote the parent of $b$ in $\cN'$, and suppose that $p'_b$ is a tree vertex. Let $v'_1$ denote the child of $p'_b$ in $\cN'$ that is not $b$. If $v'_1$ is a tree vertex or a leaf, then either there is no phylogenetic $X$-tree displayed by $\cN'$ in which $\{a, b\}$ is a cherry or there is no phylogenetic $X$-tree displayed by $\cN'$ in which $\{b, V_q\}$ is a generalised cherry. This contradiction to Lemma~\ref{cherries2} implies that we may assume $v'_1$ is a reticulation.

\begin{sublemma}
Either $C_{v'_1}=\{a\}$ or $V_{v'_1}=V_q$.
\label{sets1}
\end{sublemma}

\begin{proof}
Using the arc $(p'_b, v'_1)$, it follows that $\cN'$ displays a phylogenetic $X$-tree in which $\{b, C_{v'_1}\}$ is a generalised cherry. Thus, as $T(\cN)=T(\cN')$, Lemma~\ref{cherries2} implies that if $a\in C_{v'_1}$, then $C_{v'_1}=\{a\}$. Furthermore, by the same lemma, if $a\not\in C_{v'_1}$, then, as $\cN'$ displays a phylogenetic $X$-tree in which $\{b, V_{v'_1}\}$ is a generalised cherry, $V_q\subseteq V_{v'_1}$. But $\cN$ displays a phylogenetic $X$-tree in which $\{b, V_q\}$ is a generalised cherry and so, as $T(\cN)=T(\cN')$, we also have $V_{v'_1}\subseteq V_q$. Hence if $a\not\in C_{v'_1}$, then $V_{v'_1}=V_q$.
\end{proof}

Let $q'$ denote the parent of $p'_b$ in $\cN'$.

\begin{sublemma}
\blue{The vertex $q'$ is a tree vertex.}
\label{treevertex1}
\end{sublemma}

\begin{proof}
\blue{Suppose that $q'$ is a reticulation, and} let $u'_1$ and $u'_2$ denote the parents of $q'$. If neither $(u'_1, q')$ nor $(u'_2, q')$ is a shortcut, then, for some $i\in \{1, 2\}$, there exists a leaf \blue{$\ell\in V_{u'_i}$} but $\ell\not\in V_q\cup \{a\}$. This implies that $\cN'$ displays a phylogenetic $X$-tree in which $\{b, Z\}$ is a generalised cherry with $\ell\in Z$, thereby contradicting Lemma~\ref{cherries2} as $T(\cN)=T(\cN')$. Thus, without loss of generality, we may assume that $(u'_2, q')$ is a shortcut.

Using the arc $(u'_1, q')$ but not $(p'_b, v'_1)$, the network $\cN'$ displays a phylogenetic $X$-tree in which \blue{$\{b, C_{u'_1}-b\}$} is a generalised cherry. Therefore, as $T(\cN)=T(\cN')$, it follows that if $a\in C_{u'_1}$, then \blue{$C_{u'_1}-b=\{a\}$}. Moreover, if $a\not\in C_{u'_1}$, then, as $\cN'$ displays a phylogenetic $X$-tree in which $\{b, V_{u'_1}\}$ is a generalised cherry, $V_q\subseteq V_{u'_1}$. But $\cN$ displays a phylogenetic $X$-tree in which $\{b, V_q\}$ is a generalised cherry and so, as $T(\cN)=T(\cN')$, we have $V_{u'_1}\subseteq V_q$. Hence if $a\not\in C_{u'_1}$, then $V_{u'_1}=V_q$.

If $(u'_2, u'_1)$ is an arc of $\cN'$, then \blue{$(u'_2, q')$} is a trivial shortcut. Therefore assume that $(u'_2, u'_1)$ is not an arc. If $C_{u'_1}=\{a\}$, then, using $(u'_2, q')$ and not $(p'_b, v'_1)$, it is easily seen that $\cN'$ displays a phylogenetic $X$-tree in which $\{b, Z\}$ is a generalised cherry, where $a\in Z$ and $|Z|\ge 2$. By (\ref{sets1}), this contradiction to Lemma~\ref{cherries2} implies that $C_{v'_1}=\{a\}$, and so $V_{u'_1}=V_q$.

Now, \blue{using $(u'_2, q')$ and avoiding $(p'_b, v'_1)$, we have that} $\cN'$ displays a phylogenetic $X$-tree in which \blue{$\{b, C_{u'_2}-\{a, b\}\}$} is a generalised cherry. \blue{So,} by Lemma~\ref{cherries2}, \blue{$C_{u'_2}-\{a, b\}\subseteq C_q-b$}. On the other hand, $\cN$ displays a phylogenetic $X$-tree $\cT$ in which $\{b, C_q-b\}$ is a generalised cherry. Since $V_{u'_1}=V_q$, for $\cN'$ to display $\cT$, we must have \blue{$C_q-b\subseteq C_{u'_2}-\{a, b\}$}. Thus \blue{$C_q-b=C_{u'_2}-\{a, b\}$}.

\blue{Let $P'$ be a path in $\cN'$ from $u'_2$ to $u'_1$. Since $(u'_2, u'_1)$ is not an arc, $P'$ contains at least one vertex, $w'$ say, in addition to $u'_2$ and $u'_1$. If the child vertex of $u'_2$ that is not $q'$ has the property that one of its children is not on $P'$, choose $w'$ to be this vertex. Otherwise, if this doesn't occur, then choose $w'$ to be a tree vertex on $P'$ that is not $u'_2$ which is the start of a tree-path to a leaf avoiding $u'_1$. It is easily checked that such a vertex exists. In either case, let $x'$ denote the child of $w'$ that does not lie on $P'$. Observe that, using $(w', x')$, there is a path from $u'_2$ to a leaf avoiding $u'_1$. Now using $(u'_1, q')$, $(w', x')$, and the arcs on $P'$, it is easily seen that} $\cN'$ displays a phylogenetic $X$-tree $\cT'$ with generalised cherry $\{b, C'_q\}$, where $V_q\subseteq C'_q\subset C_q$ and cluster $C_q$. But, by Lemma~\ref{nosplitting}, \blue{if $\mathcal S'$ is an embedding of $\cT'$ in $\cN$, then the vertex of $\mathcal S'$ corresponding to $C_q$ is $q$. In particular, $\cN$ does not display $\cT'$.} This completes the proof of \blue{(\ref{treevertex1})}.
\end{proof}

\blue{By} (\ref{treevertex1}), $q'$ is a tree vertex. Let $v'_2$ be the child of $q'$ that is not $p'_b$. Note that $v'_1\neq v'_2$; otherwise, $(q', v'_2)$ is a trivial shortcut. Using the arc $(q', v'_2)$ and not $(p'_b, v'_1)$, the network $\cN'$ displays a phylogenetic $X$-tree in which $\{b, C_{v'_2}\}$ is a generalised cherry. Therefore, by Lemma~\ref{cherries2}, if $a\in C_{v'_2}$, then $C_{v'_2}=\{a\}$. Furthermore, if $a\not\in C_{v'_2}$, then again using $(q', v'_2)$ and not $(p'_b, v'_1)$, we have that $\cN'$ displays a phylogenetic $X$-tree in which $\{b, V_{v'_2}\}$ is a generalised cherry. By Lemma~\ref{cherries2}, $V_q\subseteq V_{v'_2}$. But $\cN$ displays a phylogenetic $X$-tree in which $\{b, V_q\}$ is a generalised cherry and so, by Lemma~\ref{cherries2} again, we have $V_{v'_2}\subseteq V_q$. Thus if $a\not\in C_{v'_2}$, then $V_{v'_2}=V_q$. In combination with (\ref{sets1}), we now have

\begin{sublemma}
$\{V_{v'_1}, V_{v'_2}\}=\{\{a\}, V_q\}$. Furthermore, if $V_{v'_i}=\{a\}$, then $C_{v'_i}=\{a\}$ for each $i\in \{1, 2\}$.
\label{locate1}
\end{sublemma}

Using arcs $(p'_b, v'_1)$ and $(q', v'_2)$, it follows that $\cN'$ displays a phylogenetic $X$-tree $\cT'$ with generalised cherries $\{b, V_{v'_1}\}$ and $\{V_{v'_1}\cup b, V_{v'_2}\}$. Since $T(\cN)=T(\cN')$, we have that $\cN$ displays $\cT'$ as well. But then, by considering an embedding of $\cT'$ in $\cN$ together with (\ref{locate1}), it is easily seen that $\cN$, and therefore $\cN'$, displays a phylogenetic $X$-tree $\cT$ with generalised cherries $\{b, V_{v'_2}\}$ and $\{V_{v'_2}\cup b, V_{v'_1}\}$. To see this, observe that an embedding of $\cT$ in $\cN$ can be obtained from an embedding of $\cT'$ in $\cN$ by either deleting $(p_a, p_b)$ and adding $(q, p_b)$, or deleting $(q, p_b)$ and adding $(p_a, p_b)$. Let $u'_1$ be the parent of $v'_1$ that is not $p'_b$.

\begin{sublemma}
The arc $(u'_1, v'_1)$ is a shortcut in $\cN'$. In particular, $(u'_1, q')$ is an arc in $\cN'$.
\label{shortcut2}
\end{sublemma}

\begin{proof}
Consider an embedding $\mathcal S'$ of $\cT$ in $\cN'$. Clearly, $\mathcal S'$ uses $(u'_1, v'_1)$. If $(u'_1, v'_1)$ is not a shortcut, then $\cN'$ has a tree-path from $u'_1$ to a leaf that is not in $V_q\cup a$. But then $\mathcal S'$ is not an embedding of $\cT$ in $\cN'$. Thus $(u'_1, v'_1)$ is a shortcut in $\cN'$.

Now, in $\cN'$, there is a tree-path $P'$ from $u'_1$ to a leaf $\ell$. \blue{Since $\mathcal S'$ is an embedding of $\cT$ in $\cN'$, either $\ell=b$, or $v'_2$ is a tree vertex and $\ell$ is at the end of a tree-path for $v'_2$. Both possibilities imply that there is a unique path in $\cN'$ from $u'_1$ to $b$. Choose $P'$ to be this path and, as it is a tree-path, every vertex on $P'$, except $b$, is a tree vertex. Let $t'$ denote the parent of $q'$ and observe that $t'$ is on $P'$. Say $t'\neq u'_1$, and let $w'$ be the child of $t'$ that is not $q'$. If $w'=v'_2$, then $\cN'$ has a trivial shortcut, so $w'\neq v'_2$. It follows that there is a tree-path from $w'$ to a leaf $\ell'$ such that $\ell'\not\in V_q\cup a$. Using $(u'_1, v'_1)$, $(q', v'_2)$, $(t', w')$, and the arcs on $P'$, there is a phylogenetic $X$-tree $\cT'_1$ displayed by $\cN'$ with generalised cherries $\{b, V_{v'_2}\}$ and $\{V_{v'_2}\cup b, V_{w'}\}$. Note that $\ell'\in V_{w'}$. By considering an embedding of $\cT'_1$ in $\cN$, it is easily seen that $\cN$, and therefore $\cN'$ displays a phylogenetic $X$-tree $\cT_1$ with generalised cherries $\{b, V_{v'_1}\}$ and $\{V_{v'_2}, V_{w'}\}$. If $v'_2$ is a tree vertex in $\cN'$, then $\cN'$ does not display $\cT_1$. Therefore we may assume that $v'_2$ is a reticulation in $\cN'$.}

\blue{If $w'$ is not reachable from $v'_2$, then $\ell'\not\in C_q\cup a$, in which case, using $(u'_1, v'_1)$, $(t', w')$, the arcs on $P'$, but not $(q', v'_2)$, we have that $\cN$ displays a phylogenetic $X$-tree with a generalised cherry $\{b, Z\}$, where $\ell'\in Z$. This contradicts Lemma~\ref{cherries2}, and so $w'$ is reachable from $v'_2$. But then using $(u'_1, v'_1)$, $(t', w')$, the arcs on $P'$, but not $(q', v'_2)$, it follows that $\cN'$ displays a phylogenetic $X$-tree such that neither $\{a, b\}$ nor $\{b, C'_q\}$, where $V_q\subseteq C'_q$, is a generalised cherry. This last contradiction to Lemma~\ref{cherries2}, implies that $t'=u'_1$, that is $(u'_1, q')$ is an arc in $\cN'$.}
\end{proof}

Let $\mathcal S$ be an embedding of $\cT$ in $\cN$. Let $t$ denote the vertex in $\mathcal S$ corresponding to the last common ancestor of $V_q\cup \{a, b\}$, and let $P_a$ and $P_q$ denote the paths in $\mathcal S$ from $t$ to $p_a$ and $t$ to $q$, respectively. Note that we may assume $\mathcal S$ is chosen so that there is no other embedding of $\cT$ in $\cN$ in which the vertex corresponding to the last common ancestor of $V_q\cup \{a, b\}$ is strictly reachable from $t$ in $\cN$.

\begin{sublemma}
In $\cN$, the paths $P_a$ and $P_q$ consist of the arcs $(t, p_a)$ and $(t, q)$, respectively.
\label{path1}
\end{sublemma}

\begin{proof}
We begin by observing that, apart from $p_a$ and $q$, there is no vertex on either $P_a$ or $P_q$ which is the start of a tree-path to a leaf avoiding $p_a$ and $q$. Otherwise, $\mathcal S$ is not an embedding of $\cT$ in $\cN$. First consider $P_a$, and suppose that $(t, u)$ is an arc on $P_a$, where $u\neq p_a$. Assume $u$ is a tree vertex. Then it has a child vertex, $w$ say, that is not on either $P_a$ or $P_q$. To see this, if $u$ has both of its child vertices on $P_a$, then one of its children is a reticulation, and so there is a tree-path from $u$ to a leaf avoiding $p_a$ and $q$. Furthermore, if $u$ has a child vertex on $P_q$, then either $\cN$ has a trivial shortcut or there is a tree-path from a vertex on $P_q$ to a leaf avoiding $p_a$ and $q$. Now, there is a tree-path from $w$ to a leaf $\ell_w$ such that $\ell_w\not\in \{a, b\}\cup V_q$. By (\ref{locate1}), either $C_{v'_1}=\{a\}$ or $C_{v'_2}=\{a\}$. If $C_{v'_1}=\{a\}$, then, by using $(q, p_b)$, the arcs on $P_a$ and $P_q$, and $(u, w)$, it is easily checked that there is a phylogenetic $X$-tree displayed by $\cN$ that is not displayed by $\cN'$. Moreover, if $C_{v'_2}=\{a\}$, then, by using $(p_a, p_b)$, the arcs on $P_a$ and $P_q$, and $(u, w)$, it is again easily checked that there is a phylogenetic $X$-tree displayed by $\cN$ that is not displayed by $\cN'$. These contradictions imply that $u$ is not a tree vertex.

Now assume that $u$ is a reticulation. Let $s$ denote the parent of $u$ that is not $t$. Evidently, $s$ is not on $P_a$. If $s$ is on $P_q$, then there is an embedding of $\cT$ in $\cN$ in which the least common ancestor of $V_q\cup \{a, b\}$ is strictly reachable from $t$, contradicting the choice of $\mathcal S$. Thus $s$ is not on $P_q$. As $\cN$ is normal, $(s, u)$ is not a shortcut and so there is a tree-path from $s$ to a leaf $\ell_s$, where $\ell_s\not\in \{a, b\}\cup C_q$. \blue{Note that $\ell_s$ is not reachable from $q$; otherwise, $s$ is reachable from $q$ and so $(t, u)$ is a shortcut, but $\cN$ has no shortcuts.} Applying essentially the same argument to that when $u$ is a tree vertex, we again obtain a contradiction to $T(\cN)=T(\cN')$ and conclude that $P_a$ consists of the arc $(t, p_a)$.

Now consider $P_q$ and suppose that $(t, u)$ is an arc on $P_q$. If $u$ is a tree vertex, then there is a child vertex, $w$ say, of $u$ that is not on $P_q$, and so there is a tree-path from $u$ to a leaf $\ell_w$, where $\ell_w\not\in V_q\cup \{a, b\}$. If $C_{v'_1}=\{a\}$, then, by using $(q, p_b)$, the arcs on $P_q$, and $(u, w)$, it is easily seen that $\cN$ displays a phylogenetic $X$-tree that is not displayed by $\cN'$. Moreover, if $C_{v'_2}=\{a\}$, then, by using $(p_a, p_b)$, the arcs on $P_q$, and $(u, w)$, it is again easily see that there is a phylogenetic $X$-tree displayed by $\cN$ that is not displayed by $\cN'$. These contradictions imply that $u$ is not a tree vertex, and so we may assume that $u$ is a reticulation. Let $s$ denote the parent of $u$ that is not $t$. As $\cN$ is normal, $(s, u)$ is not a shortcut and there is a tree-path from $s$ to a leaf $\ell_s$, where $\ell_s\not\in C_q\cup \{a, b\}$. \blue{Note that $s$ is not reachable from $q$; otherwise, $\cN$ has a directed cycle.} Applying essentially the same argument to that when $u$ is a tree vertex, we conclude that $P_q$ consists of the arc $(t, q)$. This completes the proof of (\ref{path1}).
\end{proof}

We complete the proof of Proposition~\ref{structure3} by considering $v'_2$ in $\cN'$. First assume that $v'_2$ is a tree vertex. Then, as $T(\cN)=T(\cN')$, it follows that, for each $i\in \{1, 2\}$, if $V_q=V_{v'_i}$, we also have $C_q-b=C_{v'_i}$. In particular, in combination with (\ref{locate1}) we have the outcome shown in Fig.~\ref{noncherry}(a). Now assume that $v'_2$ is a reticulation. Let $u'_2$ denote the parent of $v'_2$ that is not $q'$. If $(u'_2, v'_2)$ is not a shortcut, then there is a tree-path from $u'_2$ to a leaf not in $\{a, b\}\cup C_q$, in which case, by using $(u'_2, v'_2)$, it is easily checked that $\cN'$ displays a phylogenetic $X$-tree not displayed by $\cN$; a contradiction. So $(u'_2, v'_2)$ is a shortcut. Noting that $u'_2$ is a tree vertex, let $w'$ denote the child vertex of $u'_2$ that is not $v'_2$. If $w'\neq u'_1$, then there is a child vertex of $w'$ that is the initial vertex of a tree-path to a leaf not in $\{a, b\}\cup C_q$. But then, by using $(u'_2, v'_2)$, we have that $\cN'$ displays a phylogenetic $X$-tree not displayed by $\cN$; a contradiction. Thus $w'=u'_1$, and so $(u'_2, u'_1)$ is an arc in $\cN'$. Furthermore, as $T(\cN)=T(\cN')$, it follows that if $C_{v'_1}=\{a\}$, then $C_{v'_2}=C_q$, while if $C_{v'_2}=\{a\}$, then $C_{v'_1}=C_q$. Thus we have the outcome shown in Fig.~\ref{noncherry}(b), thereby completing the proof of the proposition.
\end{proof}

\section{Recursion Lemmas}
\label{recur}

With the structural outcomes of Propositions~\ref{structure1}, \ref{structure2}, and~\ref{structure3} in hand, we next establish the three lemmas that will allow the algorithm to recurse correctly. The proof of the first lemma is straightforward and omitted.

\begin{lemma}
Let $\cN$ and $\cN'$ be normal and tree-child networks on $X$, respectively, and suppose that $\{a, b\}$ is a cherry of $\cN$ and $\cN'$. Then $T(\cN)=T(\cN')$ if and only if $T(\cN\backslash b)=T(\cN'\backslash b)$.
\label{algorithm1}
\end{lemma}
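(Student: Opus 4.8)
The plan is to prove both directions by translating between displayed trees on $X$ and displayed trees on $X-b$. Since $\{a,b\}$ is a cherry of both $\cN$ and $\cN'$, the leaf $b$ attaches to the network in a very simple way: its parent $p_b=p_a$ is a tree vertex whose other child is $a$. Deleting $b$ (via the $\backslash b$ operation) therefore suppresses $p_b$ and leaves $a$ hanging directly from the grandparent, and by Lemma~\ref{deleting}(i) both $\cN\backslash b$ and $\cN'\backslash b$ are again normal and tree-child networks on $X-b$.

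For the forward direction, suppose $T(\cN)=T(\cN')$. First I would establish the natural correspondence between $T(\cN)$ and $T(\cN\backslash b)$ (and likewise for $\cN'$): because $\{a,b\}$ is a cherry of $\cN$, it is a cherry of every tree in $T(\cN)$, so there is a bijection sending each $\cT\in T(\cN)$ to the tree $\cT\backslash b\in T(\cN\backslash b)$ obtained by deleting $b$, and conversely each $\cT_0\in T(\cN\backslash b)$ lifts uniquely to the tree in $T(\cN)$ obtained by replacing the leaf $a$ with the cherry $\{a,b\}$. The key observation making this a bijection is that an embedding of $\cT$ in $\cN$ restricts to an embedding of $\cT\backslash b$ in $\cN\backslash b$ and vice versa, since the only arcs affected by deleting $b$ are those incident with $p_b$, which every embedding must use in the same forced way. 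Given $T(\cN)=T(\cN')$, applying the deletion map on both sides yields $T(\cN\backslash b)=T(\cN'\backslash b)$.

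For the reverse direction, suppose $T(\cN\backslash b)=T(\cN'\backslash b)$. Here I would argue that a tree $\cT\in T(\cN)$ necessarily has $\{a,b\}$ as a cherry, so $\cT\backslash b\in T(\cN\backslash b)=T(\cN'\backslash b)$; lifting back by reattaching $b$ at $a$, and using that $\{a,b\}$ is also a cherry of $\cN'$ so that the lift lies in $T(\cN')$, gives $T(\cN)\subseteq T(\cN')$. The symmetric argument gives the reverse inclusion.

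I do not expect any serious obstacle; the lemma is genuinely routine, which is presumably why the authors omit the proof. The only point requiring a little care is verifying that the restriction/lift of embeddings is well-defined and bijective, i.e. that no tree in $T(\cN\backslash b)$ fails to lift and no two distinct trees in $T(\cN)$ collapse to the same tree after deleting $b$. Both follow immediately from the fact that $\{a,b\}$ is a cherry in each network, so that $b$ plays no role in how any embedding navigates the reticulations of $\cN$ or $\cN'$.
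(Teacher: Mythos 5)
Your argument is correct and is essentially the intended one: the paper explicitly omits the proof as straightforward, and the natural filled-in argument is exactly your bijection $\cT\mapsto\cT\backslash b$ between $T(\cN)$ and $T(\cN\backslash b)$ (and likewise for $\cN'$), justified by the observation that every tree displayed by either network must have $\{a,b\}$ as a cherry since any embedding is forced to use both arcs out of the common parent $p_a=p_b$. This mirrors the pattern the authors do spell out in Lemmas~\ref{algorithm2} and~\ref{algorithm3}, so no gap to report.
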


\begin{lemma}
Let $\cN$ and $\cN'$ be normal and tree-child networks on $X$, and suppose that $\{a, b\}$ is a reticulated cherry of $\cN$ and $\cN'$ as shown in Figs.~\ref{cherry1} and~\ref{cherry2}, respectively. Then $T(\cN)=T(\cN')$ if and only if $T(\cN\backslash (p_a, p_b))=T(\cN'\backslash (p'_a, p'_b))$.
\label{algorithm2}
\end{lemma}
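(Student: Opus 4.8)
The plan is to write $\cN_1 = \cN\backslash(p_a,p_b)$ and $\cN'_1 = \cN'\backslash(p'_a,p'_b)$; by Lemma~\ref{deleting}(ii), $\cN_1$ is normal and $\cN'_1$ is tree-child. The whole argument rests on the observation that deleting the reticulation arc of a reticulated cherry precisely removes the option of displaying $\{a,b\}$ as a cherry. First I would establish the decomposition
\[
T(\cN_1) = \{\cT\in T(\cN) : \{a,b\}\text{ is not a cherry of }\cT\},
\]
and the analogous statement for $\cN'_1$ and $\cN'$. For the inclusion $\subseteq$, note that $T(\cN_1)\subseteq T(\cN)$ since every embedding in $\cN_1$ is an embedding in $\cN$ avoiding $(p_a,p_b)$, and that no tree in $T(\cN_1)$ has $\{a,b\}$ as a cherry because in $\cN_1$ the vertex $p_b$ is suppressed, so $b$ hangs off $q$, while $a\notin C_q$. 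For $\supseteq$, if $\{a,b\}$ is not a cherry of $\cT$, then every embedding of $\cT$ in $\cN$ must avoid $(p_a,p_b)$, since using it makes $p_a$ the common parent of $a$ and $b$ and forces a cherry; hence $\cT\in T(\cN_1)$. The same reasoning applies verbatim to $\cN'$, using that in $\cN'_1$ the leaf $b$ hangs off $q'_2$ and $a\notin C_{q'_2}=C_q$.

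Given this decomposition, the forward direction is immediate: since ``$\{a,b\}$ is a cherry of $\cT$'' is an intrinsic property of the tree $\cT$, equality $T(\cN)=T(\cN')$ forces the two display sets to have the same non-cherry members, and these are exactly $T(\cN_1)$ and $T(\cN'_1)$. Thus $T(\cN_1)=T(\cN'_1)$.

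For the converse, write $\mathcal{C}$ for the set of $\cT\in T(\cN)$ in which $\{a,b\}$ is a cherry, and $\mathcal{C}'$ for the corresponding set in $\cN'$. The decomposition gives the disjoint unions $T(\cN)=T(\cN_1)\cup\mathcal{C}$ and $T(\cN')=T(\cN'_1)\cup\mathcal{C}'$, so it suffices to prove $\mathcal{C}=\mathcal{C}'$. I would prove the transfer characterisation: $\cT\in\mathcal{C}$ if and only if $\{a,b\}$ is a cherry of $\cT$ and there is a cluster $C'_q$ of $\cT$ with $V_q\subseteq C'_q\subseteq C_q-b$ such that the tree obtained from $\cT$ by deleting $b$ and re-grafting it as the sibling of the $C'_q$-subtree lies in $T(\cN_1)$. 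One direction takes a cherry-embedding of $\cT$ (which necessarily uses $(p_a,p_b)$, as $a\notin C_q$) and re-routes $b$ through $q$ by replacing $(p_a,p_b)$ with $(q,p_b)$; this yields an embedding in $\cN_1$ of exactly such a re-grafted tree, where $V_q\subseteq C'_q$ because $V_q$ verifies the visibility of $q$, so $q$ is used and the leaves of $V_q$ lie below it, while Lemma~\ref{cherries2} pins down the range $V_q\subseteq C'_q\subseteq C_q-b$. The other direction reverses the re-routing, reinstating the suppressed vertex $p_a$ on the path to $a$ and re-attaching $b$ via $(p_a,p_b)$. The identical characterisation holds for $\cN'$ via the same re-routing through $q'_2$, and, crucially, it refers only to $T(\cN'_1)$, $V_{q'_2}$, and $C_{q'_2}$. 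Since $V_q=V_{q'_2}$, $C_q=C_{q'_2}$, and $T(\cN_1)=T(\cN'_1)$ by hypothesis, the two conditions coincide, whence $\mathcal{C}=\mathcal{C}'$ and $T(\cN)=T(\cN')$.

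The main obstacle will be the transfer characterisation: verifying in both directions that re-routing $b$ between the cherry position and the $q$-cluster yields a genuine embedding once the resulting degree-two vertices are suppressed, and that the re-grafted tree has exactly the claimed cluster structure, so that the membership test is phrased purely in terms of $T(\cN_1)$ and the local data $V_q$, $C_q$. Keeping track of the fact that $a$ has $p_a$ as its unique parent (so the re-attachment point is forced) should make the correspondence well defined, and Lemma~\ref{cherries2} supplies the admissible range of the re-grafted cluster.
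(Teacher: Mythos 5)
Your proposal is correct and takes essentially the same route as the paper's proof: the same decomposition of each display set into the trees in which $\{a,b\}$ is a cherry and those in which it is not, followed by the same re-routing of $b$ between the two reticulation arcs $(p_a,p_b)$ and $(q,p_b)$ in $\cN$ (respectively $(p'_a,p'_b)$ and $(q'_2,p'_b)$ in $\cN'$), passing through the common reduced display set $T(\cN\backslash(p_a,p_b))=T(\cN'\backslash(p'_a,p'_b))$. Your ``transfer characterisation'' is a symmetric repackaging of the paper's tree-by-tree argument; the only cosmetic difference is that you explicitly invoke $V_q=V_{q'_2}$, $C_q=C_{q'_2}$ and the sandwich bounds $V_q\subseteq C'_q\subseteq C_q-b$, which the paper's version of the swap does not need but which hold by hypothesis.
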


\begin{proof}
First observe that $T(\cN)-T(\cN\backslash (p_a, p_b))$ (resp.\ $T(\cN')-T(\cN'\backslash (p'_a, p'_b))$) consists of precisely the phylogenetic $X$-trees displayed by $\cN$ (resp.\ $\cN'$) in which $\{a, b\}$ is a cherry. Thus if $T(\cN)=T(\cN')$, then $T(\cN\backslash (p_a, p_b))=T(\cN'\backslash (p'_a, p'_b))$. Suppose $T(\cN\backslash (p_a, p_b))=T(\cN'\backslash (p'_a, p'_b))$, and let $\cT$ be a phylogenetic $X$-tree displayed by $\cN$. If $\{a, b\}$ is not a cherry in $\cT$, then, by the observation, $\cN\backslash (p_a, p_b)$, and therefore $\cN'\backslash (p'_a, p'_b)$, displays $\cT$. This implies that $\cN'$ displays $\cT$. So assume $\{a, b\}$ is a cherry in $\cT$. Let $\mathcal S$ be an embedding of $\cT$ in $\cN$. Note that $\mathcal S$ must use the arc $(p_a, p_b)$. Let $\mathcal S_1$ be the embedding in $\cN$ of a phylogenetic $X$-tree $\cT_1$ obtained from $\mathcal S$ by deleting $(p_a, p_b)$ and adding $(q, p_b)$. Since $\{a, b\}$ is not a cherry of $\cT_1$, it follows that $\cN'$ displays $\cT_1$, that is, $\cN'$ has an embedding $\mathcal S'_1$ of $\cT_1$. Now, by replacing $(q'_2,p'_b)$ with $(p'_a, p'_b)$ in $\mathcal S'_1$, we have an embedding of $\cT$ in $\cN'$. Hence $\cN'$ displays $\cT$, and so $T(\cN)\subseteq T(\cN')$. Similarly, $T(\cN')\subseteq T(\cN)$. Thus $T(\cN)=T(\cN')$.
\end{proof}

\begin{lemma}
Let $\cN$ and $\cN'$ be normal and tree-child networks on $X$, respectively. Suppose that $\{a, b\}$ is a reticulated cherry of $\cN$ as shown in Fig.~\ref{cherry3}, while $\cN'$ has the structure local to leaves $a$ and $b$ as shown in either Fig.~\ref{noncherry}(a) or Fig.~\ref{noncherry}(b). 
\begin{enumerate}[{\rm (i)}]
\item If $C_{v'_1}=\{a\}$, then $T(\cN)=T(\cN')$ if and only if
$$T(\cN\backslash (p_a, p_b))=
\begin{cases}
T(\cN'\backslash (p'_b, v'_1)), & \mbox{$v'_2$ a tree vertex or a leaf;} \\
T(\cN'\backslash \{(p'_b, v'_1), (u'_2, v'_2)\}), & \mbox{otherwise.}
\end{cases}$$

\item If $C_{v'_2}=\{a\}$, then $T(\cN)=T(\cN')$ if and only if
$$T(\cN\backslash (p_a, p_b))=
\begin{cases}
T(\cN'\backslash (u'_1, v'_1)), & \mbox{$v'_2$ a tree vertex or a leaf;} \\
T(\cN'\backslash \{(u'_1, v'_1), (u'_2, v'_2)\}), & \mbox{otherwise.}
\end{cases}$$
\end{enumerate}
\label{algorithm3}
\end{lemma}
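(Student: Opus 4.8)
The plan is to mirror the structure of the proof of Lemma~\ref{algorithm2}: identify the trees that distinguish a network from its arc-deleted version, show these are exactly the trees in which $\{a,b\}$ is a cherry, and then argue that outside of these trees the two display sets coincide iff the reduced display sets coincide. Throughout I treat parts (i) and (ii) in parallel, the only difference being which of $v'_1,v'_2$ carries the cluster $\{a\}$; I will write the argument for (i) in full and indicate that (ii) is symmetric. First I would record the crucial observation, as in Lemma~\ref{algorithm2}, that $T(\cN)-T(\cN\backslash(p_a,p_b))$ consists precisely of the phylogenetic $X$-trees displayed by $\cN$ in which $\{a,b\}$ is a cherry; this follows from Lemma~\ref{cherries2} together with the definition of $\cN\backslash(p_a,p_b)$. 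On the $\cN'$ side I would prove the analogous statement: the trees removed by deleting $(p'_b,v'_1)$ (together with $(u'_2,v'_2)$ in the reticulation case) are exactly those displayed by $\cN'$ in which $\{a,b\}$ is a cherry. This is where the case distinction on whether $v'_2$ is a tree vertex/leaf or a reticulation enters, since in the reticulation case one must additionally sever $(u'_2,v'_2)$ to prevent a spurious embedding that still realises $\{a,b\}$ as a cherry via the shortcut structure established in Proposition~\ref{structure3} (Fig.~\ref{noncherry}(b)).

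With both observations in hand, the forward direction is immediate: if $T(\cN)=T(\cN')$, then removing from each side the identical set of cherry-trees (those with $\{a,b\}$ a cherry) leaves equal display sets, so the reduced networks have equal display sets. The substantive direction is the converse. Assuming the reduced display sets are equal, let $\cT$ be displayed by $\cN$. If $\{a,b\}$ is not a cherry of $\cT$, then by the observation $\cN\backslash(p_a,p_b)$ displays $\cT$, hence so does the reduced $\cN'$-network, hence $\cN'$ displays $\cT$. If $\{a,b\}$ is a cherry of $\cT$, I would take an embedding $\mathcal S$ of $\cT$ in $\cN$, note that it uses $(p_a,p_b)$, and reroute it through $(q,p_b)$ to produce an embedding $\mathcal S_1$ of a tree $\cT_1$ in which $\{a,b\}$ is not a cherry but $\{b,C_q-b\}$ is a generalised cherry. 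Then $\cT_1$ lies in the reduced display set, so the reduced $\cN'$-network displays it via some embedding $\mathcal S'_1$; the key step is to reroute $\mathcal S'_1$ back through the deleted arc(s) of $\cN'$ to recover an embedding of $\cT$ in which $\{a,b\}$ is a cherry. Because $C_{v'_1}=\{a\}$, reinstating the arc $(p'_b,v'_1)$ (and, in the reticulation case, using $(u'_2,v'_2)$ appropriately) attaches $a$ directly below $p'_b$ alongside $b$, yielding the cherry. The symmetric containment $T(\cN')\subseteq T(\cN)$ follows by the same rerouting run in reverse.

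The main obstacle I anticipate is the rerouting in the reticulation case of part (i), where $v'_2$ is a reticulation with the shortcut $(u'_2,v'_2)$ and $(u'_2,u'_1)$ an arc (Fig.~\ref{noncherry}(b)). Here one must check that after deleting both $(p'_b,v'_1)$ and $(u'_2,v'_2)$ the reduced network is genuinely a tree-child (indeed normal) network on $X$ to which Lemma~\ref{deleting} and the earlier machinery apply, and that the reconstructed embedding of $\cT$ is valid — in particular that reinstating $(p'_b,v'_1)$ does not create a second copy of the cluster $C_{v'_2}=C_q$ that would force $a$ into the wrong place. I expect this verification to rely on the precise cluster identifications $\{C_{v'_1},C_{v'_2}\}=\{\{a\},C_q-b\}$ and $V_q=V_{v'_i}$ from Proposition~\ref{structure3}, together with Lemma~\ref{nosplitting} to pin down where the cluster $C_q$ sits in any embedding; making the correspondence between embeddings in the reduced and full networks precise, rather than merely plausible, is the delicate part of the argument. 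Part (ii) differs only in that the arc $(u'_1,v'_1)$ rather than $(p'_b,v'_1)$ is deleted, reflecting that now $v'_2$ carries $\{a\}$, and the rerouting argument is symmetric.
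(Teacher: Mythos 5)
Your plan follows essentially the same route as the paper's proof: the same observation that $T(\cN)-T(\cN\backslash(p_a,p_b))$ and $T(\cN')-T(\cN'_1)$ consist exactly of the displayed trees with $\{a,b\}$ a cherry (your point that $(u'_2,v'_2)$ must also be deleted when $v'_2$ is a reticulation, to block a spurious embedding making $\{a,b\}$ a cherry through the shortcut, is precisely why the paper's case split exists), followed by the same rerouting of embeddings --- swapping $(p_a,p_b)$ with $(q,p_b)$ in $\cN$ and $(p'_b,v'_1)$ with $(u'_1,v'_1)$ in $\cN'$ --- in both containments. The only quibbles are cosmetic: the paper does not need Lemma~\ref{nosplitting} here, instead justifying the reverse direction by noting that any embedding of a tree with cherry $\{a,b\}$ is forced to use $(u'_1,q')$, $(q',p'_b)$, $(p'_b,b)$ and, if present, $(u'_2,u'_1)$, so it may be chosen to use $(p'_b,v'_1)$ and $(q',v'_2)$; and your parenthetical hope that the reduced $\cN'$-network is normal is not needed (and not true in general) --- tree-child, via Lemmas~\ref{deleting} and~\ref{shortshort}, suffices.
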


\begin{proof}
We shall prove (i). The proof of (ii) is similar and omitted. Suppose $C_{v'_1}=\{a\}$. For convenience, let $\cN_1$ denote $\cN\backslash (p_a, p_b)$. Furthermore, let $\cN'_1$ denote $\cN'\backslash (p'_b, v'_1)$ if $v'_2$ is a tree vertex or a leaf; otherwise, let $\cN'_1$ denote $\cN'\backslash \{(p'_b, v'_1), (u'_2, v'_2)\}$. We begin by observing that $T(\cN)-T(\cN_1)$ (resp.\ $T(\cN')-T(\cN'_1)$) consists of precisely the phylogenetic $X$-trees displayed by $\cN$ (resp.\ $\cN'$) in which $\{a, b\}$ is a cherry. Therefore if $T(\cN)=T(\cN')$, then $T(\cN_1)=T(\cN'_1)$.

For the converse, suppose that $T(\cN_1)=T(\cN'_1)$. Let $\cT$ be a phylogenetic $X$-tree displayed by $\cN$. If $\{a, b\}$ is not a cherry in $\cT$, then, by the observation, $\cN_1$, and therefore $\cN'_1$, displays $\cT$. It follows that $\cN'$ displays $\cT$. So assume $\{a, b\}$ is a cherry in $\cT$. Let $\mathcal S$ be an embedding of $\cT$ in $\cN$. Since $\{a, b\}$ is a cherry in $\cT$, the embedding $\mathcal S$ uses $(p_a, p_b)$. Let $\mathcal S_1$ denote the embedding in $\cN$ of a phylogenetic $X$-tree $\cT_1$ obtained from $\mathcal S$ by deleting $(p_a, p_b)$ and adding $(q, p_b)$. Since $\{a, b\}$ is not a cherry in $\cT_1$, it follows that $\cN'$ has an embedding $\mathcal S'_1$ of $\cT_1$. This embedding $\mathcal S'_1$ must use $(u'_1, v'_1)$. By replacing $(u'_1, v'_1)$ with $(p'_b, v'_1)$ in $\mathcal S'_1$, it is easily seen that we have an embedding of $\cT$ in $\cN'$. Hence $\cN'$ displays $\cT$ and so $T(\cN)\subseteq T(\cN')$.

Now let $\cT'$ be a phylogenetic $X$-tree displayed by $\cN'$. If $\{a, b\}$ is not a cherry, then, by the observation, $\cN'_1$, and therefore $\cN_1$, displays $\cT'$. So $\cN$ displays $\cT'$. Assume $\{a, b\}$ is a cherry in $\cT'$. Let $\mathcal S'$ be an embedding of $\cT'$ in $\cN'$. As $\{a, b\}$ is a cherry in $\cT'$ \blue{and as any embedding of $\cT'$ in $\cN'$ must use $(u'_1, q')$, $(q', p'_b)$, $(p'_b, b)$ and, if it exists, $(u'_2, u'_1)$, it is easily seen that we may choose $\mathcal S'$ so that it} uses $(p'_b, v'_1)$ and $(q', v'_2)$. Let $\mathcal S'_1$ be the embedding in $\cN'$ of a phylogenetic $X$-tree $\cT'_1$ obtained from $\mathcal S'$ by deleting $(p'_b, v'_1)$ and adding $(u'_1, v'_1)$. Since $\{a, b\}$ is not a cherry in $\cT'_1$, it follows that $\cN$ has an embedding $\mathcal S_1$ of $\cT'_1$. This embedding $\mathcal S_1$ must use $(q, p_b)$. By replacing $(q, p_b)$ with $(p_a, p_b)$ in $\mathcal S_1$, it is easily checked that we obtain an embedding of $\cT'$ in $\cN$. Thus $\cN$ displays $\cT'$, and so $T(\cN')\subseteq T(\cN)$. We conclude that $T(\cN)=T(\cN')$.
\end{proof}

\section{The Algorithm}

We now give a formal description of our algorithm {\sc SameDisplaySet} for deciding if $T(\cN)=T(\cN')$, where $\cN$ and $\cN'$ are normal and tree-child networks on $X$, respectively. Immediately after the description, we show that {\sc SameDisplaySet} works correctly and analyse its running time.

\noindent {\sc SameDisplaySet} \\
\noindent{\bf Input:} Normal and tree-child networks $\cN$ and $\cN'$ on $X$, respectively. \\
\noindent{\bf Output:} {\em No} if $T(N)\neq T(\cN')$, and {\em Yes} if $T(\cN)=T(\cN')$.

\begin{enumerate}[1.]
\item \label{initial} Delete all trivial shortcuts in $\cN'$, and denote the resulting normal and tree-child networks on $X$ as $\cN_0$ and $\cN'_0$, respectively.

\item Set $i=0$.

\item \label{find} If the leaf set of $\cN_i$ has size two, return {\em yes}. Else, find a cherry or a reticulated cherry, say $\{a, b\}$, of $\cN_i$.

\item \label{abcherry1} If $\{a, b\}$ is a cherry, then determine if $\{a, b\}$ is a cherry of $\cN'_i$.
\begin{enumerate}[(a)]
\item If no, then return {\em No}.

\item Else, delete $b$ and its incident arc in $\cN_i$ and $\cN'_i$, and denote the resulting normal and tree-child networks as $\cN_{i+1}$ and $\cN'_{i+1}$, respectively. Go to Step~\ref{increment}.
\end{enumerate}

\item \label{abcherry2} Else, $\{a, b\}$ is a reticulated cherry of $\cN_i$.
\begin{enumerate}[(a)]
\item If the parent of $b$ in $\cN'_i$ is a reticulation, then determine if, up to isomorphism, the structure in $\cN'_i$ local to $a$ and $b$ is as shown in Fig.~\ref{cherry2}.
\begin{enumerate}[(i)]
\item If no, then return {\em No}.

\item Else, delete $(p_a, p_b)$ and $(p'_a, p'_b)$ from $\cN_i$ and $\cN'_i$ to obtain $\cN_{i+1}$ and $\cN'_{i+1}$, respectively. Go to Step~\ref{increment}.
\end{enumerate}

\item Else, the parent of $b$ in $\cN'_i$ is a tree vertex. Determine if, up to isomorphism, the structures in $\cN_i$ and $\cN'_i$ local to $a$ and $b$ are as shown in Fig.~\ref{cherry3} and Fig.~\ref{noncherry}(a) or Fig.~\ref{noncherry}(b), respectively.
\begin{enumerate}[(i)]
\item If no, then return {\em No}.

\item Else, let $\cN_{i+1}$ denote the normal network obtained from $\cN_i$ by deleting $(p_a, p_b)$. Further, if $C_{v'_1}=\{a\}$, let $\cN'_{i+1}$ denote the tree-child network obtained from $\cN'_i$ by deleting $(p'_b, v'_1)$ as well as $(u'_2, v'_2)$ if it exists. Otherwise, if $C_{v'_2}=\{a\}$, let $\cN'_{i+1}$ denote the tree-child network obtained from $\cN'_i$ by deleting $(u'_1, v'_1)$ as well as $(u'_2, v'_2)$ if it exists. Go to Step~\ref{increment}.
\end{enumerate}
\end{enumerate}

\item \label{increment} Increase $i$ by $1$ and go back to Step~\ref{find}.
\end{enumerate}

Theorem~\ref{main} immediately follows from the next theorem.

\begin{theorem}
Let $\cN$ and $\cN'$ be normal and tree-child networks on $X$, respectively. Then {\sc SameDisplaySet} applied to $\cN$ and $\cN'$ correctly determines if $T(\cN)=T(\cN')$. Furthermore, {\sc SameDisplaySet} runs in time \blue{quadratic} in the size of $X$.
\label{atlast}
\end{theorem}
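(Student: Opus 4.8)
The plan is to prove correctness and the running-time bound separately, both by following the recursive structure of {\sc SameDisplaySet}. For correctness, I would argue by induction on the number of iterations that the algorithm returns {\em Yes} precisely when $T(\cN)=T(\cN')$. The base of the argument is the initial Step~\ref{initial}: since $\cN$ is normal it has no shortcuts, so $\cN_0=\cN$, while repeatedly deleting trivial shortcuts from $\cN'$ preserves both tree-child-ness and the display set by Lemma~\ref{shortshort}; hence $T(\cN_0)=T(\cN)$ and $T(\cN'_0)=T(\cN')$, and it suffices to decide $T(\cN_0)=T(\cN'_0)$. The inductive engine is the pairing of each structural proposition with its recursion lemma: at a cherry, Proposition~\ref{structure1} shows that a failed test forces $T(\cN_i)\neq T(\cN'_i)$ (so returning {\em No} is correct), while Lemma~\ref{algorithm1} gives $T(\cN_i)=T(\cN'_i)$ if and only if $T(\cN_{i+1})=T(\cN'_{i+1})$; the reticulated-cherry cases are handled identically, pairing Proposition~\ref{structure2} with Lemma~\ref{algorithm2} when the parent of $b$ is a reticulation, and Proposition~\ref{structure3} with Lemma~\ref{algorithm3} when it is a tree vertex. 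Thus each non-terminating iteration reduces the instance while preserving the answer.

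For this inductive step to be legitimate I must maintain, as invariants, that $\cN_i$ is normal, $\cN'_i$ is tree-child, $\cN_i$ and $\cN'_i$ have a common leaf set, and $\cN'_i$ has no trivial shortcuts (the last being a hypothesis of Propositions~\ref{structure1} and~\ref{structure3}). Normality of $\cN_{i+1}$ follows from Lemma~\ref{deleting}(ii), since in every reticulated-cherry case we delete $(p_a,p_b)$, the reticulation arc of the reticulated cherry $\{a,b\}$ of $\cN_i$; leaf deletion in the cherry case is covered by Lemma~\ref{deleting}(i). For $\cN'_{i+1}$, the arc $(p'_a,p'_b)$ in Step~\ref{abcherry2}(a) and the arc $(p'_b,v'_1)$ in Step~\ref{abcherry2}(b) with $C_{v'_1}=\{a\}$ are each the reticulation arc of a reticulated cherry, so Lemma~\ref{deleting}(ii) applies, while the arcs $(u'_1,v'_1)$ and $(u'_2,v'_2)$ deleted in Step~\ref{abcherry2}(b) are shortcuts, as established inside Proposition~\ref{structure3}, so Lemma~\ref{shortshort} keeps the result tree-child. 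The leaf sets stay equal because we only ever delete arcs (which preserve the leaf set) or delete the same leaf $b$ from both networks. The hard part will be verifying that none of these deletions introduces a new trivial shortcut into $\cN'_{i+1}$; I would do this by a local case analysis around the suppressed vertices, using the explicit structure recorded in Figs.~\ref{cherry2} and~\ref{noncherry} and the fact that a newly created arc always has a leaf or a tree vertex as its head.

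Termination and the base case complete the correctness proof. When the leaf set of $\cN_i$ reaches size two, both $\cN_i$ and $\cN'_i$ are phylogenetic networks on the same two-element set, each of which displays exactly the unique phylogenetic tree on those two leaves; hence $T(\cN_i)=T(\cN'_i)$ and returning {\em Yes} is correct. Each non-terminating iteration either deletes a leaf (cherry case) or deletes at least one reticulation arc (reticulated-cherry cases), so the pair consisting of the number of leaves and the number of reticulations of $\cN_i$ strictly decreases in lexicographic order; since a tree-child network on $n$ leaves has at most $n-1$ reticulations, the algorithm halts after $O(|X|)$ iterations.

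Finally, for the running time, the key observation is that a tree-child (hence normal) network on $n$ leaves has $O(n)$ vertices and arcs, because its reticulation count is $O(n)$. First I would note that Step~\ref{initial} deletes $O(|X|)$ trivial shortcuts, each detectable and removable in $O(|X|)$ time, for $O(|X|^2)$ in total. Each subsequent iteration locates a cherry or reticulated cherry of $\cN_i$ by a single $O(|X|)$ scan, and then performs the relevant test. Here the critical point is that each test inspects only a constant number of cluster sets and visibility sets, namely those of $q$, and of $q'_2$, or of $v'_1$ and $v'_2$, and that $C_u$ and $V_u$ for a single vertex $u$ can each be computed in $O(|X|)$ time: $C_u$ by one traversal of the descendants of $u$, and $V_u$ as $X$ minus the set of leaves still reachable from the root after deleting $u$, computed by a single traversal. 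The subsequent set comparisons and arc deletions are likewise $O(|X|)$. Hence each iteration costs $O(|X|)$, and with $O(|X|)$ iterations the total is $O(|X|^2)$, which gives Theorem~\ref{main}. The main quantitative obstacle is precisely this linear-time-per-iteration bound: one must resist recomputing all cluster and visibility sets from scratch, which would cost $\Theta(|X|^2)$ per iteration and $\Theta(|X|^3)$ overall, and instead verify that only the few sets named above are ever needed.
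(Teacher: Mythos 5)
Your proposal is correct and takes essentially the same route as the paper: correctness by combining Lemma~\ref{shortshort} with the pairs Proposition~\ref{structure1}/Lemma~\ref{algorithm1}, Proposition~\ref{structure2}/Lemma~\ref{algorithm2}, Proposition~\ref{structure3}/Lemma~\ref{algorithm3} and the trivial base case $|X|=2$, and the quadratic bound from $O(|X|)$ iterations each costing $O(|X|)$ because only a constant number of cluster and visibility sets are needed per iteration. If anything you are more careful than the paper, which silently assumes the no-trivial-shortcut invariant on $\cN'_i$ persists through the deletions (your flagged local case analysis is genuinely needed, since a new arc between the two parents of a reticulation can turn an old shortcut into a trivial one even though every newly created arc has a leaf or tree-vertex head), while your $O(|X|^2)$ accounting of Step~\ref{initial} is weaker than the paper's $O(|X|)$ check of whether the parents of each reticulation are joined by an arc, but harmless for the final bound.
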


\begin{proof}
Ignoring the running time, by Lemma~\ref{shortshort}, we may assume that $\cN'$ has no trivial shortcuts. Therefore, as there is exactly one phylogenetic tree for when $|X|=2$, the fact that {\sc SameDisplaySet} correctly determines whether or not $T(\cN)=T(\cN')$ follows by combining Propositions~\ref{structure1}, \ref{structure2}, and~\ref{structure3} and Lemmas~\ref{algorithm1}, \ref{algorithm2}, and~\ref{algorithm3}. Thus to complete the proof of the theorem, it suffices to show that the running time of the algorithm is quadratic in the size of $X$.

Let $n=|X|$ and note that the total number of vertices in a tree-child network is linear in the size of $X$ (see~\cite{mcd15}). Thus both $\cN$ and $\cN'$ have at most $O(n)$ vertices in total. Now consider {\sc SameDisplaySet} applied to $\cN$ and $\cN'$. Step~\ref{initial} is a preprocessing step that considers, for each reticulation $v$ in $\cN'$, whether there is an arc joining the parents of $v$. Since this takes constant time for each reticulation, this step takes $O(n)$ time to complete. For iteration $i$, Step~\ref{find} finds a cherry or a reticulated cherry in $\cN_i$. Since $\cN_i$ is normal, one way to do this is to construct a maximal path that starts at the root of $\cN_i$ and ends at a tree vertex. The two leaves below this tree vertex, say $a$ and $b$, either form a cherry or a reticulated cherry in $\cN_i$. As the total number of vertices in $\cN_i$ is $O(n)$, this takes time $O(n)$. If $\{a, b\}$ is a cherry in $\cN_i$, then Step~\ref{abcherry1} determines whether or not $\{a, b\}$ is a cherry in $\cN'_i$ and, if so, deletes $b$ in both $\cN_i$ and $\cN'_i$. Therefore Step~\ref{abcherry1} takes constant time. On the other hand, if $\{a, b\}$ is a reticulated cherry in $\cN_i$, then Step~\ref{abcherry2} is called. Similar to Step~\ref{abcherry1}, this step considers the structure in $\cN_i$ and $\cN'_i$ local to $a$ and $b$, but is less straightforward. In terms of running time, the longest part of the step to complete is in determining the cluster and visibility sets of certain vertices. \blue{A single postorder transversal of each of $\cN_i$ and $\cN'_i$ can be used to determine all cluster sets of $\cN_i$ and $\cN'_i$. Since $\cN$ and $\cN'$ are both binary, the number of arcs in each is $O(n)$, so this takes time $O(n)$. Furthermore, to determine the visibility set of a vertex $u$ of $\cN_i$, we delete $u$ and its incident arcs, and check, for each leaf $\ell$, whether the resulting rooted acyclic directed graph, $D_i$ say, has a path from the root to $\ell$. That is, loosely speaking, we want to find the `cluster set', $X'$ say, of the root in $D_i$. It then follows that the visibility set of $u$ is $X_i-X'$, where $X_i$ is the leaf set of $\cN_i$. A single postorder transversal of $D_i$ is sufficient to determine $X'$, so this takes time $O(n)$. Similarly, the visibility set of a vertex in $\cN'_i$ can be found in this way. As we only need to find the visibility sets of three vertices in $\cN_i$ and $\cN'_i$, the total time to determine the necessary visibility sets is $O(n)$.} Thus the time to complete Step~\ref{abcherry2}, including the deletion of certain arcs, is $O(n)$. Hence, each iteration of {\sc SameDisplaySet} takes $O(n)$. Since each iteration deletes at least one vertex or arc in each of $\cN$ and $\cN'$, it follows that there are $O(n)$ iterations, and so the entire algorithm runs in time $O(n^2)$.
\end{proof}


\begin{thebibliography}{99}
\bibitem{bor16a} Bordewich, M., Semple, C.: Reticulation-visible networks. Advances in Applied Mathematics 76, 114--141 (2016)

\bibitem{bor16b} Bordewich, M., Semple, C.: Determining phylogenetic networks from inter-taxa distances. Journal of Mathematical Biology 73, 283--303 (2016)

\bibitem{car09} Cardona, G., Rossell\'{o}, F., Valiente, G.: Comparison of tree-child phylogenetic networks. IEEE/ACM Transactions on Computational Biology and Bioinformatics 6, 552--569 (2009)

\bibitem{cor14} Cordue, P., Linz, S., Semple, C.: Phylogenetic networks that display a tree twice. Bulletin of Mathematical Biology 76, 2664--2679 (2014)

\bibitem{doe18} D\"{o}cker, J., Linz, S., Semple, C.: Displaying trees across two phylogenetic networks, submitted.

%\bibitem{fra18} Francis, A., Semple, C., Steel, M.: New characterisations of tree-based networks and proximity measures. Advances in Applied Mathematics 93, 93--107 (2018)

%\bibitem{fra15} Francis, A., Steel, M.: Which phylogenetic networks are merely trees with additional arcs? Systematic Biology 64, 768--777 (2015)

\bibitem{gam12} Gambette, P., Huber, K.T.: On encodings of phylogenetic networks of bounded level. Journal of Mathematical Biology 65, 157--180 (2012)

\bibitem{gun17} Gunawan, A.D.M., DasGupta, B., Zhang, L.: A decomposition theorem and two algorithms for reticulation-visible networks. Information and Computation 252, 161--175 (2017)

%\bibitem{hay16} Hayamizu, M.: On the existence of infinitely many universal tree-based networks. Journal of Theoretical Biology 396, 204--206 (2016)

\bibitem{ier10} van Iersel, L., Semple, C., Steel, M.: Locating a tree in a phylogenetic network. Information Processing Letters 110, 1037--1043 (2010)

%\bibitem{jet18} Jetten, L., van Iersel, L.: Nonbinary tree-based phylogenetic networks. IEEE/ACM Transactions on Computational Biology and Bioinformatics 93, 205--217 (2018)

\bibitem{kan08} Kanj, I.A., Nakhleh, L., Than, C., Xia, G.: Seeing the trees and their branches in the network is hard. Theoretical Computer Science 401, 153--164 (2008)

\bibitem{lin13} Linz, S., St John, K., Semple, C.: Counting trees in a phylogenetic network is \#P-complete. SIAM Journal on Computing 42, 1768--1776 (2013)

\bibitem{mcd15} McDiarmid, C., Semple, C., Welsh, D.: Counting phylogenetic networks. Annals of Combinatorics 19, 205--224 (2015)

\bibitem{nak05} Nakhleh, L., Jin, G., Zhao, F., Mellon-Crummey, J.: Reconstructing phylogenetic networks using maximum parsimony. In: Proceedings of the 2005 IEEE Computational Systems Bioinformatics Conference, pp 93--102 (2005)

%\bibitem{sem16} Semple, C.: Phylogenetic networks with every embedded phylogenetic tree a base tree. Bulletin of Mathematical Biology 78, 132--137 (2016)

\bibitem{sto76} Stockmeyer, L.J.: The polynomial-time hierarchy. Theoretical Computer Science 3, 1--22 (1976).

\bibitem{wil10} Willson, S.J.: Properties of normal phylogenetic networks. Bulletin of Mathematical Biology 72, 340-358 (2010)

\bibitem{wil11} Willson, S.J.: Regular networks can be uniquely constructed from their trees. IEEE/ACM Transactions on Computational Biology and Bioinformatics 8, 785--796 (2011)

%\bibitem{zha16} Zhang, L.: On tree-based phylogenetic networks. Journal of Computational Biology 23, 553--565 (2016)
\end{thebibliography}
\end{document}